\newtheorem{thm}{Theorem}[section]
\theoremstyle{definition}
\newtheorem{definition}[thm]{Definition}
\newcommand{\R}{\mathbb{R}}
\renewcommand{\vec}[1]{\mathbf{#1}}
\newcommand{\uvec}[1]{\mathbf{\hat{#1}}}
\newcommand{\sign}[1]{\operatorname{sign}(#1)}
\newcommand{\cp}{\operatorname{cp}}
\newcommand{\grad}{\nabla}
\newcommand{\be}{\begin{equation}}  
\newcommand{\ee}{\end{equation}}
\newcommand{\reminder}[1]{{#1}}
\begin{document}

\title{A Simple Embedding Method for Scalar Hyperbolic Conservation Laws on Implicit Surfaces}

\author{
Chun Kit Hung\thanks{Department of Mathematics, the Hong Kong University of Science and Technology, Clear Water Bay, Hong Kong. Email: {\bf ckhungab@connect.ust.hk}}
\and
Shingyu Leung\thanks{Department of Mathematics, the Hong Kong University of Science and Technology, Clear Water Bay, Hong Kong. Email: {\bf masyleung@ust.hk}}
}

\date{}

\maketitle

\begin{abstract}
We have developed a new embedding method for solving scalar hyperbolic conservation laws on surfaces. The approach represents the interface implicitly by a signed distance function following the typical level set method and some embedding methods. Instead of solving the equation explicitly on the surface, we introduce a modified partial differential equation in a small neighborhood of the interface. This embedding equation is developed based on a push-forward operator that can extend any tangential flux vectors from the surface to a neighboring level surface. This operator is easy to compute and involves only the level set function and the corresponding Hessian. The resulting solution is constant in the normal direction of the interface. To demonstrate the accuracy and effectiveness of our method, we provide some two- and three-dimensional examples.
\end{abstract}

\section{Introduction}

Let $\Gamma$ be a smooth, oriented, co-dimension $1$ (compact, Riemannian) manifold without boundary in $\R^d$ with $d=2$ or 3. This paper considers solving a scalar conservation law posed on $\Gamma$ given by
\begin{align} \label{eq:SurfaceConservationLaw}
\begin{split}
&    u_t + \grad_{\Gamma} \cdot \Vec{F}(\vec{x}, u) = 0,\\
&    u(\vec{x}, 0) = u_0(\vec{x}), 
\end{split}
\end{align} 
where the initial condition $u_0$ is bounded, and $\vec{F}(\cdot, u)$ is a smooth surface flux function that satisfies the geometry-compatible condition:
$$
\grad_\Gamma \cdot \vec{F}(\vec{x}, \Bar{u}) = 0, \quad \vec{x} \in \Gamma, \Bar{u} \in \R.
$$
As discussed in \cite{ben2007well}, with the geometry-compatible condition and the boundedness of $u_0$, the surface conservation law \eqref{eq:SurfaceConservationLaw} has a unique (entropy) solution and is thus well-posed. This partial differential equation (PDE) on surfaces arises in many fields with real-life applications \cite{haltiner1971numerical, ben2007well, ben2009hyperbolic}. Therefore, it is essential to develop efficient numerical algorithms for the problem. We can categorize these methods into mainly two classes.

One approach for solving these equations is based on surface parametrization. We can rewrite the surface PDEs using the corresponding coordinate system and obtain an explicit representation of the differential operator. Numerically, we apply the same techniques next for PDEs defined in the Euclidean space $\R^d$. Although the problem could be reduced to a well-studied one, constructing such a parametrization is complex and impractical for complicated surfaces. One could also triangulate the surface and then locally approximate the differential operator on the resulting triangles. This approach has been widely used for various classes of PDEs \cite{dziuk1988finite,dziuk2013finite,flyer2009radial,olshanskii2010finite}.

Instead of having an explicit representation of the surface, i.e., a specific discretization of the surface, another popular approach is to solve the PDEs on implicit surfaces. These embedding methods represent the surface $\Gamma \subset \R^n$ implicitly using a signed-distance level set function $\phi: \R^n \to \R$ \cite{bertalmio2001variational,sethian2008solving, adalsteinsson2003transport}. That is, $\Gamma = \phi^{-1}(0)$ with $\|\nabla \phi\|=1$. With the unit normal to $\Gamma$ defined as $\uvec{n} = \grad \phi$, one introduces the projection operator as $\mbox{\underline{P}} := I - \uvec{n} \otimes \uvec{n}$, so that the surface gradient of $u$ can be expressed as $\grad_{\Gamma}u = \mbox{\underline{P}}\grad \Tilde{u} = \grad \Tilde{u} - (\uvec{n} \cdot \grad{\Tilde{u}})\uvec{n}$, where $\Tilde{u}: \R^n \to \R^n$ is the embedding of $u: {\Gamma} \to \R$, with $\Tilde{u}|_{\Gamma} = u$. Similarly, the surface Laplacian (or the so-called Laplace-Beltrami operator) of $u$ can be rewritten as $\Delta_{\Gamma} u = \mbox{\underline{P}}\grad \cdot (\mbox{\underline{P}}\grad \Tilde{u}) =\grad \cdot (\mbox{\underline{P}}\grad \Tilde{u})$. The resulting embedded PDE can be handled easily since all differential operators on surfaces are replaced by standard Cartesian operators, which can be discretized using typical finite difference approaches. To save some computational power, one might simply consider the solution $\Tilde{u}$ in a small computational tube containing $\Gamma$. A common choice is to use a tubular neighbourhood\footnote{We follow the terminology from differential geometry and use the term \textit{tubular neighborhood} here since it covers the definition from arbitrary dimensions. For some special cases, we might use \textit{annulus}, \textit{tube} or \textit{shell} to explain the setting better.} of $\Gamma$, given by ${\Gamma}^{R} := \{\vec{x} \in \R^n: |\phi| \leq R\}$, for some tube radius $R > 0$. When restricted to $\Gamma$, the solution to the embedding PDE gives the solution to the surface PDE.

The method has been further developed in \cite{xu2003eulerian} for solving the convection-diffusion equation on moving interfaces and in \cite{greer2006fourth} for solving a fourth-order PDE on surfaces. To obtain an accurate numerical solution, both works have required re-extending the surface data in $\Gamma^{R}$ to guarantee a constant normal derivative of $\Tilde{u}$ for every time step. Note that one also needs an additional boundary condition on $\partial \Gamma^{R}$ in the numerical implementation. Since we only require that the embedding PDE agrees with the surface PDE on the zero level set, we have some flexibility in how we introduce the embedding PDE. To improve the accuracy of these embedding methods, the work in \cite{greer2006improvement} introduced a modified projection operator $\Tilde{\mbox{\underline{P}}} = (I - \phi K)^{-1}\mbox{\underline{P}}$, where $\phi$ is the signed distance level set function such that $\Gamma=\phi^{-1}(0)$ and $\| \nabla \phi \|=1$, and $K$ is the curvature tensor of each level set given by the negative of the level set Hessian. If the initial surface data $\Tilde{u}_0$ is constant along the normal directions, it can be shown that the numerical solution $\Tilde{u}$ to the embedding PDE $\Tilde{u}_t = \mathcal{L}(\Tilde{u}, \Tilde{\mbox{\underline{P}}}\grad \Tilde{u}, \grad \cdot (\Tilde{\mbox{\underline{P}}}\grad \Tilde{u}))$ is always constant along the normal directions. Mathematically, we have $\grad \Tilde{u}(t) \cdot \grad\phi = 0$ for $t>0$ if the initial condition satisfies $\grad \Tilde{u}_0 \cdot \grad\phi = 0$.

Related to this level set approach, the closest-point method (CPM) has been developed \cite{ruuth2008simple, macdonald2008level} to solve time-dependent PDEs on implicit surfaces. With the closest-point function $P_{\Gamma} : \R^n \to S$, which maps a grid point $\vec{x}$ to the closest-point $P_{\Gamma}(\vec{x}) = \arg \min_{\vec{z} \in \Gamma} \|\vec{x} - \vec{z}\|$, one can rewrite the surface gradient as $\grad_{\Gamma}u(\vec{x}) = \grad u(P_{\Gamma}(\vec{x}))$ on $\Gamma$, so that the surface gradient can be computed using the Euclidean gradient.
Typical embedding methods discussed above concentrate on elliptic or parabolic PDEs. Since the solution to hyperbolic PDEs might develop discontinuities, most arguments in the above embedding method might break down when the gradient $\grad u$ no longer exists. Therefore, despite many important applications, there has not been much attention paid to hyperbolic PDEs on surfaces. One approach developed in \cite{macdonald2008level} is for solving the level set equation on surfaces. Another discussion is on the surface Eikonal equations in \cite{wong2016fast}.


This paper develops a new embedding method for solving scalar hyperbolic conservation laws on surfaces. \reminder{Drawing inspiration from differential geometry,} we introduce the push-forward operator to extend the surface flux function. The approach can provide a flux function in a small interface neighborhood that is still tangential to the interface. Therefore, this push-forward operator modifies the surface PDE and gives a PDE defined in a computational tube. Following the idea in other embedding methods, we can apply any well-developed numerical method to solve the modified PDE in the Cartesian mesh. More importantly, we can show analytically that the solution to the modified equation is constant along the normal directions of the interface. This property is essential \reminder{for maintaining} numerical accuracy in the finite difference scheme for the differential operator and the interpolation step when extracting the solution from the mesh to the surface.

The rest of the paper is organized as follows. We introduce the push-forward operator from differential geometry and propose a simple numerical method that can give high-order accurate numerical schemes in Section \ref{Sec:OurProposedMethod}. A careful comparison of our proposed approach with \reminder{several} existing embedding methods \reminder{is given in Section \ref{Sec:Comparison}.} \reminder{Implementation details are discussed in Section \ref{Sec:NumericalImplementaion}, while Section \ref{Sec:Examples} offers two- and three-dimensional examples to illustrate the accuracy of our numerical approach.} Section \ref{Sec:Conclusion} gives a conclusion and suggests some future work.


\section{Our Proposed Method}
\label{Sec:OurProposedMethod}

This section first introduces some tools from differential geometry, and then we propose a simple approach to embed the conservation laws in a small tubular neighborhood $\Gamma^{R}$. We prove that the solution to the embedding conservation law \eqref{eq:EmbeddingConservationLaw} is constant along the normal directions of $\Gamma$ during time evolution.

\subsection{The closest-point Function as a Diffeomorphism}
\reminder{
Let $\Gamma \subset \R^d$ be a manifold as described above, we can define the closest-point function of $\Gamma$, by 
\[
P_{\Gamma}(\vec{x}) := \arg \min_{\vec{y} \in \Gamma} \|\vec{x} - \vec{ y}\|   
\]
for $\vec{x}$ near $\Gamma$. In particular, since $\Gamma = \phi^{-1}(0)$ for its signed distance function $\phi$, the closest-point function can be computed explicitly by 
\[
P_{\Gamma}(\vec{x}) = \vec{x} - \phi(\vec{x})\grad \phi(\vec{x}),
\] 
where $\vec{x} \in \Gamma^{R} = \{|\phi(\vec{x})| < R\} \subset \R^d$ is the tubular neighbourhood of $\Gamma$.} If the given interface has a form other than the signed distance representation, we could have the following preprocessing steps before our proposed algorithm. 
\reminder{If the level set is not a signed distance function, one can follow the reconstruction procedure described in \cite{houlizoshzha97,say14} to obtain a high-order approximation to the closest-point.}
If an explicit parametrization of the interface is known (for example, $\vec{f}(\vec{s})$ for some parameterization $\vec{s}$), we can determine the shortest distance and the corresponding nearest point by minimizing the function $d(\vec{s};\vec{x})=\frac{1}{2}\| \vec{f}(\vec{s})-\vec{x}\|^2$.

Based on this closest-point representation, we can compute the surface gradient and surface divergence intrinsic to $\Gamma$ using their Cartesian counterpart \cite{ruuth2008simple}, i.e. 
\begin{thm} \label{thm: Equivalent of Operators}
Let $\Gamma$ be a smooth, closed surface embedded in $\R^d$, given by the zero level set of its signed distance function $\phi$.
Let $u$ be a smooth function whose value is constant along the normal directions of $\Gamma$, then
    $\grad_{\Gamma} u(\vec{x}) = \grad u(P_{\Gamma}(\vec{x}))$
     for $\vec{x} \in \Gamma$.     
Let $\vec{V}$ be a vector field which is tangent to all level sets $\Gamma_{h} = \phi^{-1}(h)$ for small $h$, then
     $
     \grad_{\Gamma} \cdot \vec{V}(\vec{x}) = \grad \cdot \vec{V}(P_{\Gamma}(\vec{x})), 
     $
     for $\vec{x} \in \Gamma$.
\end{thm}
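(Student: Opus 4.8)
The plan is to observe first that $P_{\Gamma}$ fixes $\Gamma$ pointwise: since $\phi(\vec{x})=0$ on $\Gamma$, the explicit formula gives $P_{\Gamma}(\vec{x}) = \vec{x} - \phi(\vec{x})\grad\phi(\vec{x}) = \vec{x}$ there. Hence the right-hand sides $\grad u(P_{\Gamma}(\vec{x}))$ and $\grad\cdot\vec{V}(P_{\Gamma}(\vec{x}))$ are simply the ordinary Euclidean gradient and divergence evaluated on $\Gamma$, and both claims reduce to showing that these Cartesian operators already agree with their intrinsic surface counterparts under the stated hypotheses. (The Jacobian $\grad P_{\Gamma} = \underline{P} - \phi\,\grad^2\phi$, which collapses to the tangential projection $\underline{P} = I - \uvec{n}\otimes\uvec{n}$ on $\Gamma$, is what makes $P_{\Gamma}$ a diffeomorphism of the tube onto $\Gamma$ and underlies this reduction, but it is not needed explicitly here.) The single structural fact I would lean on throughout is the signed-distance identity $\|\grad\phi\|=1$: differentiating it yields $(\grad^2\phi)\grad\phi = 0$, i.e. the level-set Hessian annihilates the normal, equivalently $\p_{\uvec{n}}\uvec{n}=0$.

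For the first identity I would start from the projection formula recalled in the introduction, $\grad_{\Gamma} u = \underline{P}\,\grad u = \grad u - (\uvec{n}\cdot\grad u)\,\uvec{n}$. Because $u$ is constant along the normal directions, $\grad u\cdot\uvec{n} = \grad u\cdot\grad\phi = 0$, so the normal term drops and $\grad_{\Gamma} u = \grad u$ on $\Gamma$. Since $P_{\Gamma}|_{\Gamma}=\mathrm{id}$, this is exactly $\grad u(P_{\Gamma}(\vec{x}))$, which proves the first claim.

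The second identity is the substantive one. I would write the surface divergence of the vector field as $\grad_{\Gamma}\cdot\vec{V} = \operatorname{tr}(\underline{P}\,\grad\vec{V}) = \grad\cdot\vec{V} - \uvec{n}\cdot(\p_{\uvec{n}}\vec{V})$, where $\p_{\uvec{n}}\vec{V} = (\uvec{n}\cdot\grad)\vec{V}$ is the directional derivative of $\vec{V}$ along the normal. It then suffices to show the correction term $\uvec{n}\cdot(\p_{\uvec{n}}\vec{V})$ vanishes on $\Gamma$. Here I would use that $\vec{V}$ is tangent to every nearby level set, i.e. $\vec{V}\cdot\grad\phi\equiv 0$ throughout $\Gamma^{R}$, and differentiate this scalar identity in the normal direction:
\[
0 = \p_{\uvec{n}}(\vec{V}\cdot\uvec{n}) = (\p_{\uvec{n}}\vec{V})\cdot\uvec{n} + \vec{V}\cdot(\p_{\uvec{n}}\uvec{n}).
\]
By the signed-distance identity $\p_{\uvec{n}}\uvec{n} = (\grad^2\phi)\uvec{n} = 0$, the last term vanishes, leaving $(\p_{\uvec{n}}\vec{V})\cdot\uvec{n}=0$. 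Therefore $\grad_{\Gamma}\cdot\vec{V} = \grad\cdot\vec{V}$ on $\Gamma$, which again by $P_{\Gamma}|_{\Gamma}=\mathrm{id}$ is the asserted formula.

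The crux, and the only place the geometry genuinely enters, is the cancellation in the divergence argument: without the signed-distance normalization the term $\vec{V}\cdot(\p_{\uvec{n}}\uvec{n})$ would not vanish, and the two divergences would differ by a curvature-type contribution built from $\grad^2\phi$. I therefore expect the main care to go into justifying $\p_{\uvec{n}}\uvec{n}=0$ cleanly from $\|\grad\phi\|=1$, and into fixing a consistent convention for the Jacobian $\grad\vec{V}$ so that $\operatorname{tr}(\underline{P}\,\grad\vec{V})$ really represents the intrinsic surface divergence of the tangential field; the remaining manipulations are routine.
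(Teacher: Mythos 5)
Your argument is correct. Note that the paper itself does not prove this theorem: it is stated as a known result imported from the closest-point-method literature (the citation to Ruuth and Merriman), so there is no in-paper proof to compare against. Your proof is the standard one and all the steps check out: $P_{\Gamma}|_{\Gamma}=\mathrm{id}$ reduces both identities to comparing the Cartesian operators with the intrinsic ones on $\Gamma$; the gradient case is immediate from $\grad u\cdot\grad\phi=0$ and the projection formula; and for the divergence case the correction term $\uvec{n}\cdot\bigl((\uvec{n}\cdot\grad)\vec{V}\bigr)$ vanishes because differentiating $\vec{V}\cdot\grad\phi\equiv 0$ along the normal and using $(\grad^2\phi)\grad\phi=0$ (from $\|\grad\phi\|=1$) kills both terms. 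You correctly identify that the hypothesis must be that $\vec{V}$ is tangent to \emph{all} nearby level sets, not merely to $\Gamma$, since you need $\vec{V}\cdot\grad\phi=0$ on an open neighborhood before you may differentiate it in the normal direction; that is exactly how the theorem is phrased. The only point worth making explicit in a polished write-up is the one you already flag: that $\grad_{\Gamma}\cdot\vec{V}=\operatorname{tr}\bigl((I-\uvec{n}\otimes\uvec{n})\,D\vec{V}\bigr)$ is independent of the choice of extension of $\vec{V}$ off $\Gamma$ for tangential fields, so that the left-hand side is well defined before you compare it with the Euclidean divergence of the particular extension at hand.
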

The closest-point function suggests a natural way to extend a surface quantity $u:\Gamma \to \R$ to its tubular neighborhood $\Gamma^{R}$, given by $u(P_{\Gamma}(\cdot))$. This extension is constant along the normal directions of $\Gamma$, hence we have $\grad_{\Gamma} u(\vec{x}) = \grad u(P_\Gamma(\vec{x}))$ on $\Gamma$. However, since the closest-points of computational grids are generally not grid points, the closest-point extension requires interpolation of closest-point values during the evolution. If the surface quantity $u$ develops discontinuities, obtaining a high-order interpolation procedure would be difficult.

\reminder{Next}, we provide a slightly different interpretation of the closest-point function $P_\Gamma$. Consider $\Gamma_h \subset \R^d$, representing the $h$-level set of $\phi$, where $h>0$ is small such that $P_\Gamma$ is well-defined. Define $P_h: \Gamma_h \to \Gamma$ by $P_h(\vec{x}) = \vec{x} - h\grad\phi(\vec{x})$, which is the restriction of $P_\Gamma$ from the tubular neighborhood $\Gamma^{R}$ to the $h$-level set $\Gamma_h$. Notice that $P_h$ is a diffeomorphism between the $h$-level set $\Gamma_h$ and the interface $\Gamma$. This is because $\phi$ is smooth, and its inverse $P_h^{-1}: \Gamma \to \Gamma_h$ is given by $P_h^{-1}(\vec{y}) = \vec{y} + h\grad\phi(\vec{y})$, which is also smooth. Given a smooth function $u$ on $\Gamma$, the most natural way to generate a function on the $h$-level set $\Gamma_h$ is to \textbf{pull-back} $u$ by the operator $P_h$. This pull-back operator of $P_h$ is given by
$$
P_h^{*}u = (u \circ P_h)(\vec{x}) = u(P_h(\vec{x})), \quad \vec{x} \in \Gamma_h.
$$
This gives the normal extension in the closest-point method when restricted to $\Gamma_h$.


\subsection{Our Proposed Method}

The key idea behind our proposed method is that each level set $\Gamma_h$ is diffeomorphic to $\Gamma$ through the closest-point function, and we should be able to pose a PDE on $\Gamma_h$ describing the same physical phenomena. A natural way to pass a tangent vector field between $\Gamma$ and $\Gamma_h$ is called the \textbf{push-forward}. 
\begin{definition}[Push-Forward]
Given a tangent vector field $\vec{F}_{h}$ on $\Gamma_h$, the push-forward of $\vec{F}_{h}$ by $P_h$ at $\vec{x} \in \Gamma_h$ is a linear map given by
\[
(P_h)_*\vec{F}_{h}(\vec{x}) = (P_h \circ \alpha)'(0),
\]
where $\alpha: (-1,1) \to \Gamma_h$ with $\alpha(0) = \vec{x}$ and $\alpha'(0) = \vec{F}_{h}(\vec{x})$. 
\end{definition}
Geometrically speaking, the push-forward $(P_h)_*\vec{F}_{h}(\vec{x})$ is the initial velocity of the image curve $(P_h \circ \alpha)(t)$ on $\Gamma$, with the base point of $(P_h)_*\vec{F}_{h}(\vec{x})$ given by $P_h(\vec{x})$, as shown in Figure \ref{Fig:Pushforward}. It is also known that the push-forward operator is independent of the local coordinates and the choice of $\alpha$ \cite{gray2017modern}.

\begin{figure}[!ht]
    \centering
    \includegraphics[width=0.3\textwidth]{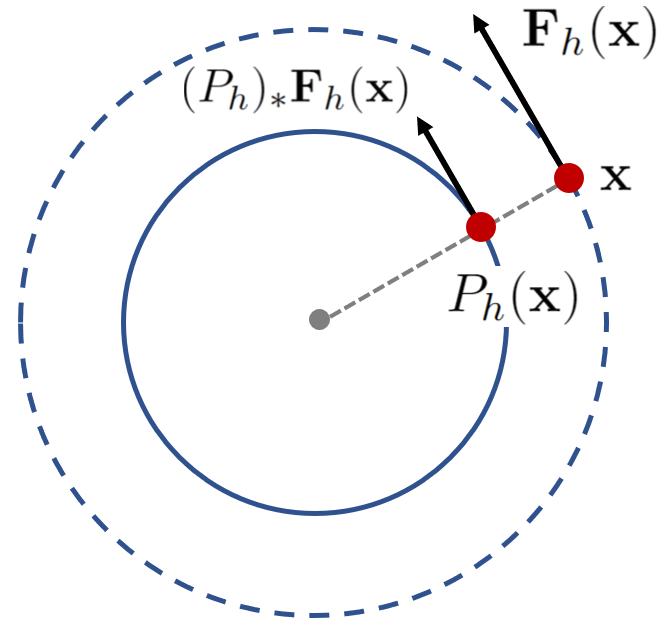}
    \caption{ The push-forward operator.}
    \label{Fig:Pushforward}
\end{figure}

In our application, we only have the surface data on $\Gamma$, and we may consider the push-forward of $\vec{F}$ by $P^{-1}_h$ instead. Here, we give an explicit formula for computing $P^{-1}_h$. This formula does not rely on local coordinates, which perfectly fits our level set representation of $\Gamma$.

\begin{thm} \label{thm: Pushforward Matrix}
Let $\vec{F}$ be a tangent vector field defined on $\Gamma$, where $\Gamma$ is the surface described above. For any $\vec{x} \in \Gamma_h$, the push-forward of $\vec{F}$ by $P_h^{-1}$ is given by 
$$
\vec{F}_h(\vec{x}) = [(P_{h}^{-1})_{*}\vec{F}](\vec{x})= \left[I - hH(\vec{x})\right]^{-1}\vec{F}(P_h(\vec{x})),
$$
where $H$ is the Hessian matrix of $\phi$.
\end{thm}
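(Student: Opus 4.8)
The plan is to unwind the definition of the push-forward via the chain rule and then reduce everything to the Jacobian of the explicitly known map $P_h$. Writing $\vec{y} = P_h(\vec{x}) \in \Gamma$, choose a curve $\beta:(-1,1)\to\Gamma$ with $\beta(0)=\vec{y}$ and $\beta'(0)=\vec{F}(\vec{y})$. Applying the push-forward definition with $P_h^{-1}$ in place of $P_h$, the pushed vector based at $\vec{x}=P_h^{-1}(\vec{y})\in\Gamma_h$ is $\vec{F}_h(\vec{x}) = (P_h^{-1}\circ\beta)'(0)$. By the chain rule this equals $\mathrm{d}(P_h^{-1})_{\vec{y}}\,\vec{F}(\vec{y})$, so the task reduces to identifying the differential $\mathrm{d}(P_h^{-1})_{\vec{y}}$ with the matrix $[I-hH(\vec{x})]^{-1}$; the independence of the answer from the choice of $\beta$ is exactly the coordinate-independence of the push-forward quoted just after the definition.

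First I would avoid differentiating $P_h^{-1}(\vec{z})=\vec{z}+h\grad\phi(\vec{z})$ directly, since that yields $I+hH$ with the Hessian evaluated at $\vec{y}\in\Gamma$ rather than at $\vec{x}$ as the statement requires, and instead exploit that $P_h:\Gamma_h\to\Gamma$ is a diffeomorphism. Differentiating the identity $P_h\circ P_h^{-1}=\mathrm{id}_\Gamma$ at $\vec{y}$ gives $\mathrm{d}(P_h)_{\vec{x}}\circ\mathrm{d}(P_h^{-1})_{\vec{y}}=I$, hence $\mathrm{d}(P_h^{-1})_{\vec{y}}=[\mathrm{d}(P_h)_{\vec{x}}]^{-1}$. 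Now $P_h$ is the restriction to $\Gamma_h$ of the ambient map $\vec{z}\mapsto\vec{z}-h\grad\phi(\vec{z})$, whose ambient Jacobian is $I-hH(\vec{x})$; this already carries the Hessian at the correct point $\vec{x}\in\Gamma_h$, so all that remains is to justify that inverting the full $d\times d$ matrix reproduces the intrinsic inverse differential on tangent spaces.

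The main obstacle, and the step I would treat most carefully, is precisely this bookkeeping between the ambient matrix $I-hH(\vec{x})$ and the intrinsic map on tangent spaces. Since $\phi$ is a signed distance function, $\|\grad\phi\|=1$, and differentiating $\grad\phi\cdot\grad\phi=1$ gives $H\grad\phi=0$; thus $\uvec{n}=\grad\phi$ is a null eigenvector of $H(\vec{x})$, and $I-hH(\vec{x})$ preserves the orthogonal splitting $\R^d=T_{\vec{x}}\Gamma_h\oplus\mathrm{span}(\uvec{n})$, acting as the identity on the normal line. Because $\grad\phi$ is constant along the normal rays of a signed distance function, the unit normal at $\vec{x}\in\Gamma_h$ coincides with the unit normal at $\vec{y}=P_h(\vec{x})\in\Gamma$, so the two tangent planes $T_{\vec{x}}\Gamma_h$ and $T_{\vec{y}}\Gamma$ are the same subspace of $\R^d$. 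Consequently $I-hH(\vec{x})$ maps this common plane to itself, $\vec{F}(\vec{y})$ lies in it, and the full-matrix inverse restricted to the plane agrees with the intrinsic inverse differential. Finally, on $T_{\vec{x}}\Gamma_h$ the eigenvalues of $I-hH(\vec{x})$ are $1-h\kappa_i$, where the $\kappa_i$ are the principal curvatures of $\Gamma_h$, and these stay positive for $h$ small enough that $P_\Gamma$ is well defined, which secures invertibility. Assembling these observations yields $\vec{F}_h(\vec{x})=[I-hH(\vec{x})]^{-1}\vec{F}(P_h(\vec{x}))$, as claimed.
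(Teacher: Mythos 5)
Your proposal is correct and follows essentially the same route as the paper: both compute the differential of $P_h(\vec{z})=\vec{z}-h\grad\phi(\vec{z})$ along a curve to obtain the matrix $I-hH(\vec{x})$ and then invert it to represent $(P_h^{-1})_*$. The extra bookkeeping you supply --- that $H\grad\phi=0$ makes $I-hH(\vec{x})$ preserve the common tangent plane of $\Gamma_h$ and $\Gamma$, and that the eigenvalues $1-h\kappa_i$ guarantee invertibility for small $h$ --- is left implicit in the paper's proof, so your write-up is, if anything, more complete.
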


\begin{proof}
Fix $\vec{x} \in \Gamma_h$, let $\vec{v}(\vec{x})$ be any tangent vector of $\Gamma_h$ at $\vec{x}$, we first compute $(P_h)_*\vec{v}(\vec{x})$. Let $\alpha: (-1,1) \to \Gamma_h$ be a smooth curve on $\Gamma_h$ satisfying $\alpha(0) = \vec{x}$ and $\alpha'(0) = \vec{v}(\vec{x})$, then
\begin{align*}
(P_h)_*\vec{v}(\vec{x}) 
&= (P_h \circ \alpha)'(0) = \frac{d}{dt} \big[\alpha(t) - h\grad \phi(\alpha(t))\big]\Big|_{t=0} \\
&= \big[\alpha'(t) - hH(\alpha(t)) \alpha'(t)\big]\Big|_{t=0} = \left[I - hH(\alpha(0))\right]\alpha'(0) \\
&= \left[I - hH(\vec{x})\right] \vec{v}(\vec{x}) \, .
\end{align*}
Therefore, $(P_{h})_*$ at $\vec{x}$ has the matrix representation $\left[I - hH(\vec{x})\right]$, which implies that $(P^{-1}_{h})_*$ at $P_{h}(\vec{x})$ can be expressed as $\left[I - hH(\vec{x})\right]^{-1}$. 
%
\end{proof}

We apply the push-forward operator to solve conservation laws on surfaces. Let $\vec{F}(\vec{x}, u)$ be the surface flux function on $\Gamma$ described before.
We have a conservation law posed on $\Gamma_h$:
\begin{align*}
\begin{split}
&    \Tilde{u}_t + \grad_{\Gamma_h} \cdot \vec{F}_{h}(\vec{x}, \Tilde{u}) = 0, \\
&    \Tilde{u}(\vec{x}, 0) = u_0(P_{h}(\vec{x})).
\end{split}
\end{align*}
Therefore, introducing the embedding flux $\Tilde{\vec{F}}$ in $\Gamma^{R}$ as
$$
    \Tilde{\vec{F}}(\vec{x}, u) = \left[I - \phi(\vec{x})H(\vec{x})\right]^{-1} \vec{F}(P_{\Gamma}(\vec{x}), u) \, ,
$$
we obtain the embedding problem in the whole tubular neighborhood:
\begin{align} \label{eq:EmbeddingConservationLaw}
\begin{split}
&    \Tilde{u}_t + \grad \cdot \Tilde{\vec{F}}(\vec{x}, \Tilde{u}) = 0, \\
&    \Tilde{u}(\vec{x}, 0) = u_0(P_{\Gamma}(\vec{x})).
\end{split}
\end{align}

Since the push-forward operator does not change the smoothness of the flux function, the uniqueness of \eqref{eq:EmbeddingConservationLaw} follows the original problem on $\Gamma$ \eqref{eq:SurfaceConservationLaw}. For the rest of this section, we prove that, if $u(\vec{x},t)$ is the solution to the surface conservation law \eqref{eq:SurfaceConservationLaw}, then $u(P_{\Gamma}(\vec{x}),t)$ is the solution to equation \eqref{eq:EmbeddingConservationLaw}. This property means that the solution to equation \eqref{eq:EmbeddingConservationLaw} is always constant along the normal directions of $\Gamma$. This property enables us to replace the surface divergence with the simple Cartesian divergence in the computation. Since our method does not require additional interpolations, our approach is particularly attractive for hyperbolic problems when the solution might contain shocks and discontinuities.

\begin{thm}
The solution $\Tilde{u}$ to equation \eqref{eq:EmbeddingConservationLaw} is constant along the normal directions of $\Gamma$ for $t>0$.
\end{thm}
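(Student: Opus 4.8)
My plan is to produce the unique solution in closed form and read off the normal-constancy from its shape, rather than to track the evolution of $\grad\Tilde{u}\cdot\grad\phi$ by differentiating \eqref{eq:EmbeddingConservationLaw} in the normal direction; the latter route is tempting for smooth data but becomes delicate once shocks form, which is precisely the regime of interest. Let $u(\vec{x},t)$ be the entropy solution of the surface law \eqref{eq:SurfaceConservationLaw} on $\Gamma$, and take as candidate $\Tilde{u}(\vec{x},t):=u(P_{\Gamma}(\vec{x}),t)$. Since $P_{\Gamma}$ sends every point of a given normal line to the same foot point on $\Gamma$, this candidate is constant along normals by construction, so $\grad\Tilde{u}\cdot\grad\phi\equiv 0$. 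Because the push-forward preserves smoothness, \eqref{eq:EmbeddingConservationLaw} inherits a unique entropy solution; it therefore suffices to check that this $\Tilde{u}$ solves \eqref{eq:EmbeddingConservationLaw} throughout $\Gamma^{R}$, and the theorem follows at once.

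To verify the equation at a point $\vec{x}\in\Gamma_h$ with foot point $\vec{y}=P_h(\vec{x})$, the time term contributes $\Tilde{u}_t(\vec{x},t)=u_t(\vec{y},t)$, so I must identify the flux term $\grad\cdot\Tilde{\vec{F}}(\cdot,\Tilde{u}(\cdot,t))$ with the surface divergence $\grad_{\Gamma}\cdot\vec{F}(\cdot,u(\cdot,t))$ evaluated at $\vec{y}$. First I would record that the field $\vec{W}:=\Tilde{\vec{F}}(\cdot,\Tilde{u}(\cdot,t))$ restricts on each level set to the push-forward by $P_h^{-1}$ of the tangential flux $\vec{F}(\cdot,u(\cdot,t))$: this is exactly Theorem \ref{thm: Pushforward Matrix}, and it is consistent because $[I-\phi H]^{-1}$ fixes $\grad\phi$ (as $H\grad\phi=0$) and hence carries tangential vectors to tangential vectors. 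Consequently $\vec{W}$ is tangent to every level set, and a one-line computation using $H\grad\phi=0$ shows its normal derivative has no normal component, so the Cartesian divergence collapses to the intrinsic one, $\grad\cdot\vec{W}=\grad_{\Gamma_h}\cdot\vec{W}$ on each $\Gamma_h$.

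It then remains to compare $\grad_{\Gamma_h}\cdot\vec{W}$ at $\vec{x}$ with $\grad_{\Gamma}\cdot\vec{F}(\cdot,u)$ at $\vec{y}$. On $\Gamma$ itself this is immediate from Theorem \ref{thm: Equivalent of Operators}. To handle the dependence of the flux on the unknown I would invoke the geometry-compatible condition, which makes the frozen-$u$ surface divergence vanish and reduces the surface law to its advective form $u_t+\p_u\vec{F}\cdot\grad_{\Gamma}u=0$; the field $\p_u\vec{F}$ is genuinely tangential, so Theorems \ref{thm: Pushforward Matrix} and \ref{thm: Equivalent of Operators} apply to it directly and the matching closes on $\Gamma$.

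The hard part will be pushing this identity off $\Gamma$ and through the whole tube, that is, matching the intrinsic divergence on $\Gamma_h$ at $\vec{x}$ with the one on $\Gamma$ at $\vec{y}$. The two level sets carry different induced metrics and $P_h$ is not an isometry, so the intrinsic divergence does not commute with the push-forward for free: the Jacobian of the normal-offset map, encoded by $\det[I-\phi H]$ on the tangent space (equivalently by the principal curvatures of $\Gamma$), enters and must be tracked. The crux is therefore to show that the defining scaling $[I-\phi H]^{-1}$ of the embedding flux is exactly the factor that, in combination with the geometry-compatible condition, cancels these curvature corrections, so that $\grad\cdot\Tilde{\vec{F}}(\cdot,\Tilde{u})$ is itself constant along normals and equal to its foot-point value. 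Once that normal-constancy of the divergence term is established, the pointwise verification of \eqref{eq:EmbeddingConservationLaw} in $\Gamma^{R}$ is complete, and uniqueness delivers the theorem.
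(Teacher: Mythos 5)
Your overall strategy --- exhibit the normal extension $u(P_{\Gamma}(\vec{x}),t)$ as the unique solution of \eqref{eq:EmbeddingConservationLaw} and read off normal-constancy from its form --- is legitimate and genuinely different from the paper's route. The paper instead runs the method of characteristics: it shows that the characteristic of \eqref{eq:EmbeddingConservationLaw} launched from $P_h^{-1}(\vec{x}_0)$ is exactly the $P_h^{-1}$-image of the characteristic launched from $\vec{x}_0\in\Gamma$ (an ODE-uniqueness argument using only the chain rule $\frac{d}{dt}P_h^{-1}(\vec{x}(t))=[I-hH]^{-1}\vec{x}'(t)$ together with Theorem \ref{thm: Pushforward Matrix}), and invokes the geometry-compatible condition only to conclude that $\Tilde{u}$ is constant along each such characteristic. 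However, your write-up has a genuine gap at precisely the point you yourself flag as ``the crux'': you never establish that $\grad\cdot\Tilde{\vec{F}}(\cdot,\Tilde{u})$ evaluated at $\vec{x}\in\Gamma_h$ equals $\grad_{\Gamma}\cdot\vec{F}(\cdot,u)$ at $P_h(\vec{x})$, and this is not a routine verification. For a diffeomorphism $\psi$ between level sets with Jacobian density $\rho$ (here $\rho=\prod_i(1-\phi\kappa_i)^{-1}$, the determinant of $[I-\phi H]^{-1}$ on the tangent space), the divergence of a push-forward transforms as $\grad_{\Gamma_h}\cdot(\psi_*\vec{V})\circ\psi=\rho^{-1}\grad_{\Gamma}\cdot(\rho\vec{V})=\grad_{\Gamma}\cdot\vec{V}+\rho^{-1}(\grad_{\Gamma}\rho)\cdot\vec{V}$; the transform that preserves divergence on the nose is the Piola-type transform $\rho^{-1}\psi_*\vec{V}$, not $\psi_*\vec{V}$. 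So the ``curvature corrections'' you hope cancel produce, a priori, the extra term $\rho^{-1}(\grad_{\Gamma}\rho)\cdot\vec{F}$, which does not vanish merely because $\grad_{\Gamma}\cdot\vec{F}(\cdot,\bar{u})=0$. Asserting that the scaling $[I-\phi H]^{-1}$ ``is exactly the factor that cancels these corrections'' restates the claim to be proved; it does not prove it. By contrast, the paper's characteristic argument needs only the weaker fact that $\Tilde{u}$ is constant along characteristics, and it obtains the geometric matching between level sets from the definition of the push-forward applied to curves, not to divergences.

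A secondary issue: your appeal to uniqueness for \eqref{eq:EmbeddingConservationLaw} needs more than ``the push-forward preserves smoothness.'' The embedded problem is posed on the bounded tube $\Gamma^{R}$ with boundary $\partial\Gamma^{R}$, so uniqueness requires either a boundary condition or an argument that no information enters through $\partial\Gamma^{R}$ (which the tangency of $\Tilde{\vec{F}}$ to the level sets would supply, but you should say so). Moreover, since you motivate your route by its robustness to shocks, the verification of the candidate must then be carried out in the weak/entropy sense --- Rankine--Hugoniot and the entropy inequality transported through $P_h$ --- whereas your pointwise verification is a smooth-regime argument, leaving you no better off than the characteristic proof you set out to avoid.
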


\begin{proof}
Let $\vec{x}_0 \in\Gamma$, and denote the characteristic curve of \eqref{eq:EmbeddingConservationLaw} as $\vec{x}(t)$, where $\vec{x}(0) = \vec{x}_0$. Also, let $\vec{x}_h(t)$ be the characteristic curve starts at $P^{-1}_h(\vec{x}_0)$ on $\Gamma_h$. First, we can write down the characteristic system for $\vec{x}(t)$:
\begin{align*}
   \vec{x}'(t) 
   &= \Tilde{\vec{F}}_{u}(\vec{x}(t), \Tilde{u}(\vec{x}(t), t)) = \vec{F}_{u}(\vec{x}(t), \Tilde{u}(\vec{x}(t), t)), \\
    \frac{d\Tilde{u}}{dt}(t) &= - (\grad \cdot \Tilde{\vec{F}})(\vec{x}(t), \Tilde{u}(\vec{x}(t), t)) = 0,
\end{align*}
where we use the geometry-compatible condition on $\Gamma$ in the second equation. Similarly, we can also write it down for $\vec{x}_{h}(t)$:
\begin{align*}
   \vec{x}_h'(t) &= \Tilde{\vec{F}}_{u}(\vec{x}_h(t), \Tilde{u}(\vec{x}_h(t), t)), \\
    \frac{d\Tilde{u}}{dt}(t) &= - (\grad \cdot \Tilde{\vec{F}})(\vec{x}_h(t), \Tilde{u}(\vec{x}_h(t), t)) = 0.
\end{align*}
The second equation for $\Tilde{u}$ equals to zero (using Lemma 1.1 in \cite{d1969divergence}). We can see that, the geometry-compatible condition guarantees that $\Tilde{u}$ is constant along the characteristic curves $\vec{x}(t)$ and $\vec{x}_h(t)$. 

Finally, we need to show $\vec{x}_h(t) = P^{-1}_h(\vec{x}(t))$, i.e. the image curve of $\vec{x}(t)$ on $\Gamma_h$, is given by $\vec{x}_h(t)$. Consider
\begin{align*}
\frac{d}{dt}P^{-1}_{h}(\vec{x}(t)) 
&= \Big[I - hH\big(P^{-1}_h(\vec{x}(t))\big)\Big]^{-1}\vec{x}'(t) \\
&= \Big[I - hH\big(P^{-1}_h(\vec{x}(t))\big)\Big]^{-1} \vec{F}_{u}(\vec{x}(t), \Tilde{u}(\vec{x}(t), t)),
\end{align*}
and
$$
\vec{x}_h'(t) = \Tilde{\vec{F}}_{u}(\vec{x}_h(t), \Tilde{u}(\vec{x}_h(t), t)) = \Big[I - hH\big(\vec{x}_h(t)\big)\Big]^{-1} \vec{F}_u(P_{\Gamma}(\vec{x}_h(t)), \Tilde{u}(\vec{x}_h(t), t)).
$$
Since $\vec{x}(0) = P_{h}(\vec{x}_h(0))$, we have $\frac{d}{dt}P^{-1}_{h}(\vec{x}(t))\big|_{t=0} = \vec{x}'_h(0)$. For $t > 0$, these two curves have to be identical. 

Since the initial condition as defined in \eqref{eq:EmbeddingConservationLaw} is constant along the normal directions, we can then conclude that the solution $\Tilde{u}$ is also constant along the normal directions of $\Gamma$ for all $h$ as long as the closest-point function $P_{\Gamma}$ is well-defined. 
\end{proof}

\subsection{Eigenvalues of the Push-Forward Operator}
\label{SubSec:PushforwardEigenvalue}
There is a recent embedding method for solving Hamilton-Jacobi equations on surfaces, where the unique viscosity solution is given by the normal extension of the solution to the original surface problem \cite{martin2020equivalent}. The key component of their method is the singular values of the Jacobian matrix of $P_\Gamma$, given by $\sigma_{i} = 1 - \phi(\vec{x})\kappa_{i}(\vec{x})$, where $\kappa_i$ are the principal curvatures of $\Gamma_{\phi(\vec{x})}$ at $\vec{x}$. 

We found that when restricted to $\Gamma_h$, the eigenvalues of the push-forward matrix $\left(P_h^{-1}\right)_{*}$ we discuss here resembles $\sigma_i^{-1}$ as mentioned in \cite{martin2020equivalent}. Here, we concentrate only on the surface case where $d = 3$, but the discussion in the two-dimensional curve case is similar and, therefore, omitted.

\begin{thm} \label{thm: Eigenvalues}
Let $\left(P_h^{-1}\right)_{*}$ be the push-forward matrix. The eigenvalues of the operator are given by $\lambda_1 = (1 - h\kappa_1)^{-1}$, $\lambda_2 = (1 - h\kappa_2)^{-1}$ and $1$, with the associated eigenvectors given by the corresponding principal direction vectors $\vec{p}_1, \vec{p}_2$ of the level surface $\phi(\vec{x})$ and the normal vector $\grad \phi(\vec{x})$.
\end{thm}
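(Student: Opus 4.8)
The plan is to reduce everything to the eigenstructure of the Hessian $H$, since by Theorem \ref{thm: Pushforward Matrix} the push-forward matrix restricted to $\Gamma_h$ is exactly $\left[I - hH(\vec{x})\right]^{-1}$, evaluated at a point $\vec{x}$ with $\phi(\vec{x}) = h$. The elementary observation driving the argument is that if $H$ has an eigenpair $(\mu, \vec{w})$, then $I - hH$ has eigenpair $(1 - h\mu, \vec{w})$, and hence $\left[I - hH\right]^{-1}$ has eigenpair $\left((1 - h\mu)^{-1}, \vec{w}\right)$, provided $1 - h\mu \neq 0$, which holds for $h$ small enough that the closest-point function is well-defined. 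It therefore suffices to show that $H$ has eigenvalues $0, \kappa_1, \kappa_2$ with eigenvectors $\grad\phi, \vec{p}_1, \vec{p}_2$, and then apply the transformation $\mu \mapsto (1 - h\mu)^{-1}$, which fixes eigenvectors and sends the eigenvalues to $1, (1 - h\kappa_1)^{-1}, (1 - h\kappa_2)^{-1}$.

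First I would handle the normal direction. Because $\phi$ is a signed distance function, $\|\grad\phi\|^2 \equiv 1$; differentiating this identity gives $2 H \grad\phi = 0$, so $\grad\phi$ is an eigenvector of $H$ with eigenvalue $0$. This immediately produces the eigenvalue $1 = (1 - h\cdot 0)^{-1}$ of $\left(P_h^{-1}\right)_{*}$ with eigenvector $\grad\phi$, accounting for the normal component.

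Next I would identify the action of $H$ on the tangent space of the level surface. Since $H$ is symmetric and $\grad\phi$ lies in its kernel, the range of $H$ is contained in $(\grad\phi)^{\perp}$, so $H$ restricts to a self-adjoint endomorphism of the tangent plane $T_{\vec{x}}\Gamma_{\phi(\vec{x})}$, whose remaining eigenvectors are orthogonal to $\grad\phi$. The key step is to recognize this restriction as the shape operator (Weingarten map) of the level surface: with unit normal field $\uvec{n} = \grad\phi$, the tangential derivative of $\uvec{n}$ is precisely $H$, whose eigenvalues are by definition the principal curvatures $\kappa_1, \kappa_2$ and whose eigenvectors are the principal directions $\vec{p}_1, \vec{p}_2$. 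Applying $\mu \mapsto (1 - h\mu)^{-1}$ then yields the eigenvalues $(1 - h\kappa_1)^{-1}$ and $(1 - h\kappa_2)^{-1}$ with eigenvectors $\vec{p}_1, \vec{p}_2$, completing the list of three.

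The main obstacle is the differential-geometric identification in the third step, namely pinning down the correct sign convention so that the tangential eigenvalues of $H$ come out as the $\kappa_i$ rather than $-\kappa_i$, consistent with the singular-value formula $\sigma_i = 1 - \phi\kappa_i$ of \cite{martin2020equivalent}. Everything else is routine: the orthogonal diagonalizability of the symmetric matrix $H$ and the spectral mapping $\mu \mapsto (1 - h\mu)^{-1}$ fit together cleanly, so once the curvature convention is fixed the three eigenpairs follow directly.
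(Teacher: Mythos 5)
Your proposal is correct and follows essentially the same route as the paper: differentiate $\grad\phi\cdot\grad\phi=1$ to place $\grad\phi$ in the kernel of $H$, identify the tangential part of $H$ with the curvature tensor (so its eigenpairs are $(\kappa_i,\vec{p}_i)$), and then pass the spectrum through $\mu\mapsto(1-h\mu)^{-1}$. You supply a little more justification than the paper does (the symmetry/invariant-subspace argument and the explicit invertibility condition $1-h\mu\neq 0$ for small $h$), but the underlying argument is the same.
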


\begin{proof}
It is known that for the signed distance function $\phi$, the curvature tensor is given by its Hessian matrix \cite{lehmann2012notes}. Differentiating both sides of $\grad \phi \cdot \grad \phi = 1$, we have $H(\vec{x}) \grad \phi(\vec{x}) = \vec{0}$ which implies that $\grad \phi(\vec{x})$ is an eigenvector of $H(\vec{x})$ associated to the eigenvalue $0$. Another two eigenvectors are given by the principal direction vectors $\vec{p}_1, \vec{p}_2$ are the eigenvectors associated to $\lambda_1$ and $\lambda_2$. Then, the eigenvalues of $I - hH(\vec{x})$ is clearly given by $1-h\lambda_{1}, 1-h\lambda_{2}$ and $1$.
\end{proof}


\subsection{A Simple Example}
\label{SubSec:Example_Circle}

In this simple example, we provide some details to demonstrate the effectiveness of our proposed approach. We consider the unit circle $\Gamma$ defined implicitly by the zero level set of the signed distance function $\phi(x,y) = \sqrt{x^2 + y^2} - 1$. The Hessian matrix of $\phi$ is given by
$$
H(x,y) = 
\frac{1}{r^3}
\begin{bmatrix}
 y^2 & -xy\\
-xy & x^2 \\
\end{bmatrix}
$$
where $r=\sqrt{x^2+y^2}$. Therefore, the eigenvalues of $H(x,y)$ are given by $0$ and $r^{-1}$ with the corresponding eigenvectors given by $(x,y)$ and $(-y,x)$ respectively, which are nothing but simply the normal and tangent vectors to the circle.

\begin{figure}[!ht]
\centering
\includegraphics[width=0.5\textwidth]{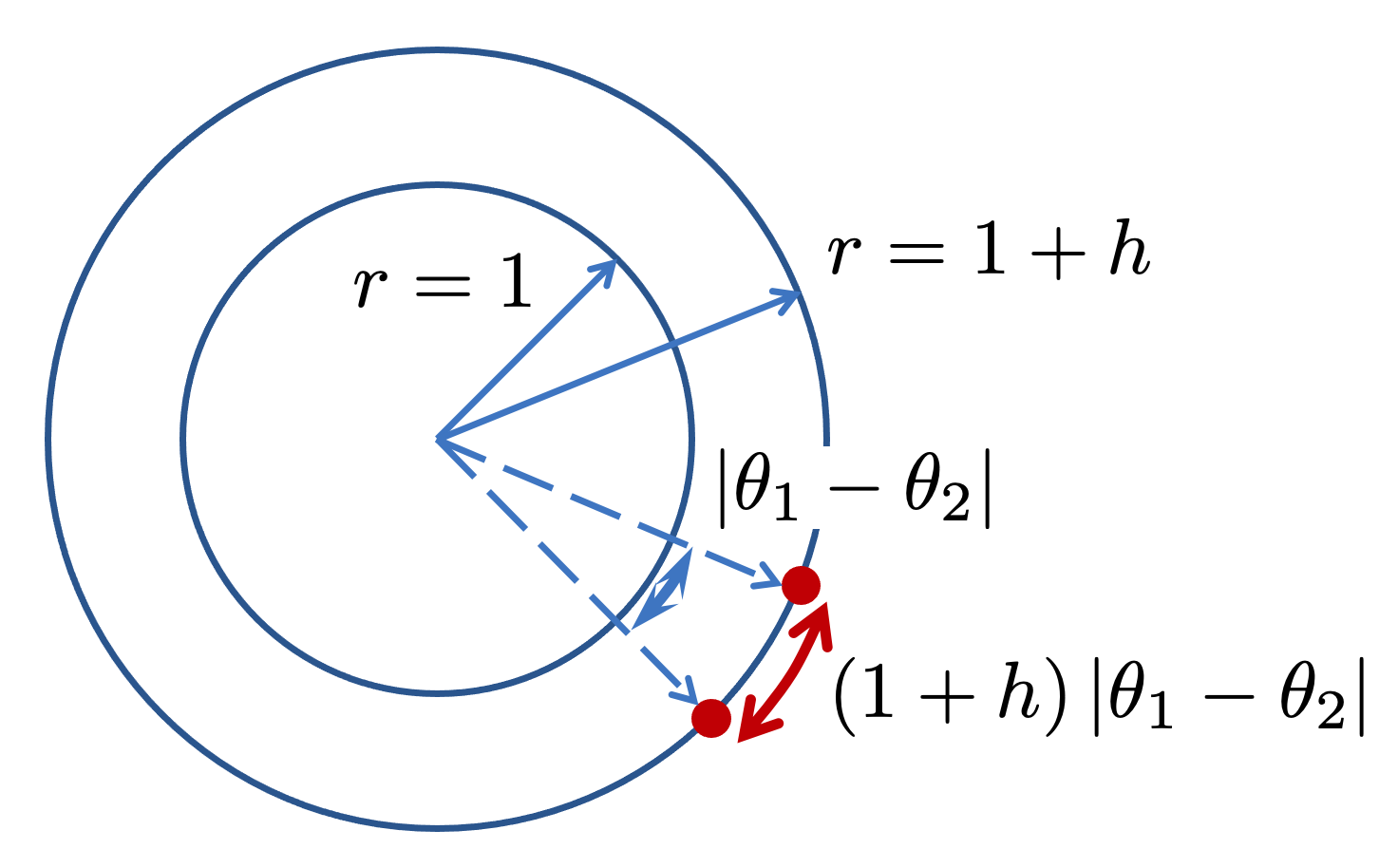}
\caption{(Section \ref{SubSec:Example_Circle}) 
Geodesic distances between two points and their projection.
}
\label{Fig:Example_Circle}
\end{figure}

\reminder{Then}, consider the computation tube $\Gamma^{R} = \{ |\phi| < R\}$ is an annulus with radius $R > 0$ and two points $(r,\theta)=(1+h,\theta_1)$ and $(1+h, \theta_2) \in \Gamma_h \subset \Gamma^R$ written in the polar coordinates, on the $h$-level set of $\phi$. As shown in Figure \ref{Fig:Example_Circle}, the geodesic distance between these two points on $\Gamma_h$ is given by $(1+h)|\theta_1-\theta_2|$. Since the geodesic distance between the projections of these two points on $\Gamma$ is $|\theta_1-\theta_2|$, we should adjust the propagation velocity on the $h$-level set by a factor of $(1+h)$ to account for the difference in the distance travelled. 

According to the discussion in the last section, we see that the eigenvalues of our push-forward matrix $\left[I - h H(x,y)\right]^{-1}$ are given by $1$ and $(1 - h\kappa)^{-1}= 1+h$ where $\kappa$ is the curvature of the circle with radius $1+h$. This term matches exactly the required factor to align the propagation speed for different level sets.


\section{Comparison with Other Embedding Approaches}
\label{Sec:Comparison}

This section \reminder{provides} a detailed comparison and \reminder{demonstrates} the differences between our proposed method and some other existing embedding methods.

\subsection{The Modified Projection Operator \cite{greer2006improvement}}
\label{Sec:RelationshipModifiedProjection}

The work \cite{greer2006improvement} introduced a modified projection operator $\Tilde{\mbox{\underline{P}}} = (I - \phi K)^{-1}\mbox{\underline{P}}$, where $\phi$ is a signed distance representation of the level set function, and $K$ is the curvature tensor of each level set given by the negative of the level set Hessian. This correction to the projection operator shares some similarities with our proposed approach. Both expressions involve the level set function and its Hessian, and both can produce a solution that is constant along the normal directions of the level sets.

One minor difference is that there is a negative-sign mismatch in the operator. The modified projection operator contains the operator $(I-\phi K)^{-1} = (I + \phi H)^{-1}$, which looks different from our operator $(I-\phi H)^{-1}$ in the embedding equation. We believe this difference might come from how one defines the sign of the curvature.

The significant difference between these two approaches is the fundamental concept of how we construct the embedding equation. The work of \cite{greer2006improvement} followed \cite{bertalmio2001variational} using the idea of projection, which does not appear and is not needed in our formulation. Such an operator has to be applied to all surface gradient operators in the PDE and enforces that information propagates only on each level set.

To demonstrate the differences from our proposed approach, we consider applying the techniques from \cite{greer2006improvement} to the advection equation and a general hyperbolic conservation law, \reminder{and write down explicitly the corresponding equations}.

\begin{itemize}
\item As demonstrated in \cite{greer2006improvement}, one obtains the embedding equation $\Tilde{u}_t + \vec{V} \cdot \Tilde{\mbox{\underline{P}}} \grad \Tilde{u}=0$ when using the projection technique for advection equations. It leads to
$$
\Tilde{u}_t + \Tilde{\mbox{\underline{P}}}^T \vec{V} \cdot \grad \Tilde{u} = 0, ,
$$
which is different from our formulation $\Tilde{u}_t + \left[I - \phi H\right]^{-1}\vec{V}\cdot \grad \Tilde{u} = 0$. The approach in \cite{greer2006improvement} requires an extra projection in front of the push-forward operator, which is missing in our formulation.
\item The work in \cite{greer2006improvement} \reminder{does} not consider any general hyperbolic conservation laws. But following the basic principle, \reminder{we} might consider the embedding equation
\begin{equation}
\Tilde{u}t + \Tilde{\mbox{\underline{P}}} \grad \cdot \vec{F}(P_{\Gamma}(\vec{x}), \Tilde{u}) =0 \, ,
\label{eq:GreerConservationLaws}
\end{equation}
Since the push-forward operator is a function of space, this equation does not match with our formulation (\ref{eq:EmbeddingConservationLaw}). Equation (\ref{eq:GreerConservationLaws}) has an extra projection operator in front of the divergence operator, which makes it in the non-conservative form. When we convert our approach to a similar non-conservative form, our corresponding equation gives a source term on the right-hand side of the equation. Therefore, these two equations are indeed different from each other.
\end{itemize}

\subsection{The \reminder{Closest}-Point Method \cite{macdonald2008level}}
The closest-point method involves substituting standard surface gradients and divergence operators with typical Cartesian derivatives in the embedding space. This substitution comes at the cost of replacing the spatial variable $\vec{x}$ with the closest-point evaluation $\cp(\vec{x})$, i.e., $P_{\Gamma}(\vec{x})$. As a result, the approximation of the derivative in a local neighborhood involves an extra step of high-order, high-dimensional interpolation, leading to a nonlocal evolution equation. For instance, applying the closest-point method to a surface PDE $u_t=F[t,\vec{x},u,\nabla_{\Gamma} u, \nabla_{\Gamma} \cdot ( \nabla_{\Gamma} u/\|\nabla_{\Gamma} u\|) ]$ with the initial condition $u(\vec{x},0)=u_0(\vec{x})$ yields
\begin{eqnarray*}
&& \Tilde{u}_t =F \left[t,\cp(\vec{x}),\Tilde{u}(\cp(\vec{x})),\nabla \Tilde{u}(\cp(\vec{x})), \nabla \cdot \left( \frac{\nabla \Tilde{u}(\cp(\vec{x}))}{\|\nabla \Tilde{u}(\cp(\vec{x}))\|} \right) \right]\\
&& \Tilde{u}(\vec{x},0) = {u}_0(\cp(\vec{x})) \, . 
\end{eqnarray*}
Therefore, when we have conservation laws, the method gives
\begin{eqnarray*}
&& \Tilde{u}_t + \grad \cdot \vec{\Tilde{F}}(\cp(\vec{x}), \Tilde{u}(\cp(\vec{x}))) = 0 \\
&& \Tilde{u}(\vec{x},0) = {u}_0(\cp(\vec{x})) \, . 
\end{eqnarray*}
Although this equation looks similar to our formulation (\ref{eq:EmbeddingConservationLaw}), these two equations are different. The equation from the closest-point method is an evolution equation when one needs to incorporate an interpolation step to obtain $\Tilde{u}(\cp(\vec{x}))$ for $\vec{x}$ in a small neighborhood of the grid point $\vec{x}_i$ of interest. In contrast, we are developing an entire PDE-based method where we can easily approximate all partial derivatives using finite-difference methods.

\subsection{The Straightforward Embedding Method \cite{wong2016fast}}
If we apply the simple embedding idea in \cite{wong2016fast} to the surface scalar conservation law \eqref{eq:SurfaceConservationLaw}, we obtain
\begin{eqnarray*}
&& \Tilde{u}t + \grad \cdot \vec{\Tilde{F}}(\vec{x}, \Tilde{u}) = 0 , \\
&& \Tilde{u}(\vec{x},0) = u_{0}(P_{\Gamma}(\vec{x})) \, ,
\end{eqnarray*}
where $\vec{\Tilde{F}}(\vec{x}, \Tilde{u}) := \vec{F}(P_{\Gamma}{(\vec{x})}, \Tilde{u})$. The function $P_{\Gamma}(\vec{x})$ is the closest-point function of $\vec{x}$ onto the interface $\Gamma$. It can be shown that with such a simple embedding idea, the embedded flux function $\vec{\Tilde{F}}$ does not generate any flux across different level surfaces. Therefore, this straightforward approach also produces an embedding equation consistent with the surface conservation law. Although this equation looks similar to our formulation (\ref{eq:EmbeddingConservationLaw}), how we modify the flux function differs. This straightforward approach does not incorporate an extra operator $\left[I - \phi H\right]^{-1}$ in the flux function. Because of the lack of this extra factor, information on different level surfaces propagates at different speeds depending on the curvature of the corresponding level surface. The characteristics propagate faster on level sets with more significant curvature and slower on level sets with smaller curvature. Although the solution restricted to the zero level set gives the required solution to the surface problem, solutions on different level set surfaces generally differ from those obtained by our formulation (\ref{eq:EmbeddingConservationLaw}).

\section{Numerical Implementations}
\label{Sec:NumericalImplementaion}

\reminder{In this section}, we summarize the proposed method and discuss the implementation details. \reminder{Our computational domain is a uniform Cartesian grid of size $\Delta x$ with an embedded tubular region where computations are carried out. The current implementation generates the push-forward matrix for every grid point in the embedded domain, despite a significant proportion of these grid points being unused.} To improve memory efficiency, one could use special data structures like a list to store all mesh points within the computational tube efficiently. This approach could significantly reduce the memory requirement for high-dimensional computations. However, further investigation is needed to explore this possibility in the future.

Our proposed computational algorithm is straightforward. The method requires some pre-processing steps, including a step to construct the push-forward matrix and an interpolation step to extend the initial condition on the surface to the computational tube. Then we can solve the modified conservation law in the embedded domain as in a typical problem in the Cartesian space.

\vspace{0.2cm}
\noindent
\textsf{Step 1}: Compute the closest-points for the grid points. For each grid point $\vec{x}$, we determine the corresponding closest-point on $\Gamma$ using $P_{\Gamma}(\vec{x}) = \vec{x} - \phi(\vec{x})\grad \phi(\vec{x})$, with $\phi$ as the signed distance function representation of $\Gamma$.

\vspace{0.2cm}
\noindent
\textsf{Step 2}: Determine the computational tube $\Gamma^R$. Following other embedding methods for solving PDE\reminder{s} on surfaces, we require the tube radius to be small enough so that every grid point within the computational tube \reminder{has} a unique closest-point projection. Mathematically, this condition implies that $R<\kappa_{\max}^{-1}$, where $\kappa_{\max}$ is the maximal principal curvature of $\Gamma$. On the other hand, the tube radius has to be large enough to include the whole discretization stencil. In this work, we fix $R=3\Delta x$.

\vspace{0.2cm}
\noindent
\textsf{Step 3}: Construct the push-forward matrix $\left[I - \phi(\vec{x})H(\vec{x})\right]^{-1}$. For every grid point $\vec{x} \in \Gamma^{R}$, we compute the Hessian matrix of $\phi$ using central difference, and therefore $\left[I - \phi(\vec{x})H(\vec{x})\right]$. We determine its inverse and store it on the \reminder{disk} for later computation.

\vspace{0.2cm}
\noindent
\textsf{Step 4}: Extend $u_0$ from $\Gamma$ to $\Gamma^R$. If the initial data $u_0$ is smooth enough, we use a high-order interpolation to compute $u_0(P_\Gamma(\vec{x}))$. However, for a piecewise smooth initial condition, we avoid interpolation and analytically determine the location of the projection of each grid point in the computational tube and directly evaluate the function value.

\vspace{0.2cm}
\noindent
\textsf{Step 5}: Solve the embedding conservation law \eqref{eq:EmbeddingConservationLaw}. Since equation \eqref{eq:EmbeddingConservationLaw} is a Cartesian conservation law posed in a subset of $\R^d$, we can apply existing numerical methods. In the numerical experiments, we consider the \reminder{Forward Euler method} with the Lax-Friedrichs scheme as the first-order method and the TVDRK3-WENO3 as the third-order method \cite{shu1998essentially}. We have chosen the Lax-Friedrichs flux splitting method in the WENO3. At every time step $t^n$, we first compute $\vec{F}(\vec{x}, u^n)$ for every grid point $\vec{x} \in \Gamma^R$, and then determine $\Tilde{\vec{F}}(\vec{x}, u^n)$. The numerical fluxes are computed from $\Tilde{\vec{F}}(\vec{x}, u^n)$. In particular, let $\Tilde{\vec{F}} = (f, g)$, we have the following discretizations. The \reminder{Forward Euler method} with the Lax-Friedrichs method is given by
\begin{equation*}
\frac{u^{n+1}_{i,j} - u^{n}_{i,j}}{\Delta t} +
\frac{f_{i+1/2, j} - f_{i-1/2, j}}{\Delta x} +
\frac{g_{i, j+1/2} - g_{i, j-1/2}}{\Delta y} = 0 \, ,
\end{equation*}
where
\begin{align*}
f_{i-1/2, j} &= \frac{1}{2} \left[ f_{i-1,j} + f_{i,j} - \frac{\Delta x}{d \Delta t} (u_{i,j} - u_{i-1,j}) \right] \, , \\
g_{i,j-1/2} &= \frac{1}{2} \left[ g_{i,j-1} + g_{i,j} - \frac{ \Delta y }{d \Delta t} (u_{i,j} - u_{i,j-1}) \right]
\end{align*}
with the dimension of the problem given by $d$, and $\Delta t$ satisfying the CFL condition
$$
\Delta t \max_u \left[ \frac{|f'(u)|}{\Delta x} + \frac{|g'(u)|}{\Delta y} \right] = \mbox{CFL} \le 1 \, ,
$$
while the third-order method is given by
\begin{align*}
u^{(1)}_{i,j} &= u^{n}_{i,j} + \Delta t \mathcal{L}(u_{i,j}^n), \nonumber \\
u^{(2)}_{i,j} &= \frac{3}{4}u^{n}_{i,j} + \frac{1}{4}\left(u^{(1)}_{i,j} + \Delta t \mathcal{L}(u^{(1)}_{i,j}) \right), \\
u^{n+1}_{i,j} &= \frac{1}{3}u^{n}_{i,j} + \frac{2}{3}\left(u^{(2)}_{i,j} + \Delta t \mathcal{L}(u^{(2)}_{i,j}) \right). \nonumber
\end{align*}
Here, $\mathcal{L}(u_{i,j}) = - \frac{1}{\Delta x}\left(\hat{f}_{i+1/2,j} - \hat{f}_{i-1/2,j}\right) - \frac{1}{\Delta y}\left(\hat{g}_{i,j+1/2} - \hat{g}_{i,j-1/2}\right)$, and $\hat{f}$ and $\hat{g}$ are the WENO3 fluxes.

When solving the advection equation on $\Gamma$, we follow the same idea and use the push-forward matrix to modify the propagation velocities:
\begin{align} \label{eq: EmbeddingAdvection}
\begin{split}
& \Tilde{u}_t + \left[I - \phi(\vec{x})H(\vec{x})\right]^{-1} \vec{V}(P_{\Gamma}(\vec{x})) \cdot \grad \Tilde{u} = 0, \\
& \Tilde{u}(\vec{x}, 0) = u_0(P_{\Gamma}(\vec{x})) \, .
\end{split}
\end{align}
Numerically, we implement the \reminder{Forward Euler method} with the upwind scheme as the first-order method and the TVDRK3 with the standard HJ-WENO3 scheme as the third-order method.

Regarding the boundary conditions on the computational tube, we impose the Neumann boundary condition $\partial_{\vec{n}} u = 0$ on $\partial \Gamma^R$, with $\vec{n} = \grad \phi$ denoting the outward unit normal vector. In the numerical implementation, we follow the embedding idea in \cite{wong2016fast} and introduce a thin computation layer, $(\Gamma^{R'} \setminus \Gamma^{R}) = { R \leq |\phi| < R'}$, and solve the extension equation $u_t + \mbox{sign}(\phi) \vec{n}\cdot \grad u = 0$ up to the steady-state in all intermediate time steps.

\section{Numerical Examples}
\label{Sec:Examples}

In this section, we consider various two- and three-dimensional examples to demonstrate the effectiveness and performance of the proposed algorithm. In all numerical examples, the computational domain is chosen to be a square/cube of side-length $2$ centered at the origin, and we choose the radius of the inner tube $R = 3\Delta x$ and the outer tube $R' = 8\Delta x$. Unless specified otherwise, the CFL number is always chosen to be $0.5$ for both first- and third-order methods. The matrix $\left[ I - \phi(\vec{x})H(\vec{x}) \right]$ is computed by second-order central difference, even though we can explicitly construct the analytical expression when the signed distance function is known. In our examples, we found that the error introduced does not affect the convergence rate.

To study the convergence behavior of our proposed approach, we measure the $L^1$-, $L^2$-, and $L^\infty$-errors on $\Gamma$. Since we compute the solutions on the Cartesian mesh, we first interpolate these gridded solutions on a fine surface mesh by the high-order \textsf{cubic} interpolation implemented using \textsf{MATLAB} \textsf{interp2} (for curves in two dimensions) or \textsf{interp3} (for surfaces in three dimensions). Then we integrate these values appropriately using the Trapezoidal rule to obtain approximations of these errors.


\subsection{Advection Equation on the Unit Circle}
\label{SubSec:ExAdvectionCircle}

\begin{figure}[!ht]
    \centering
    \includegraphics[width=1\textwidth]{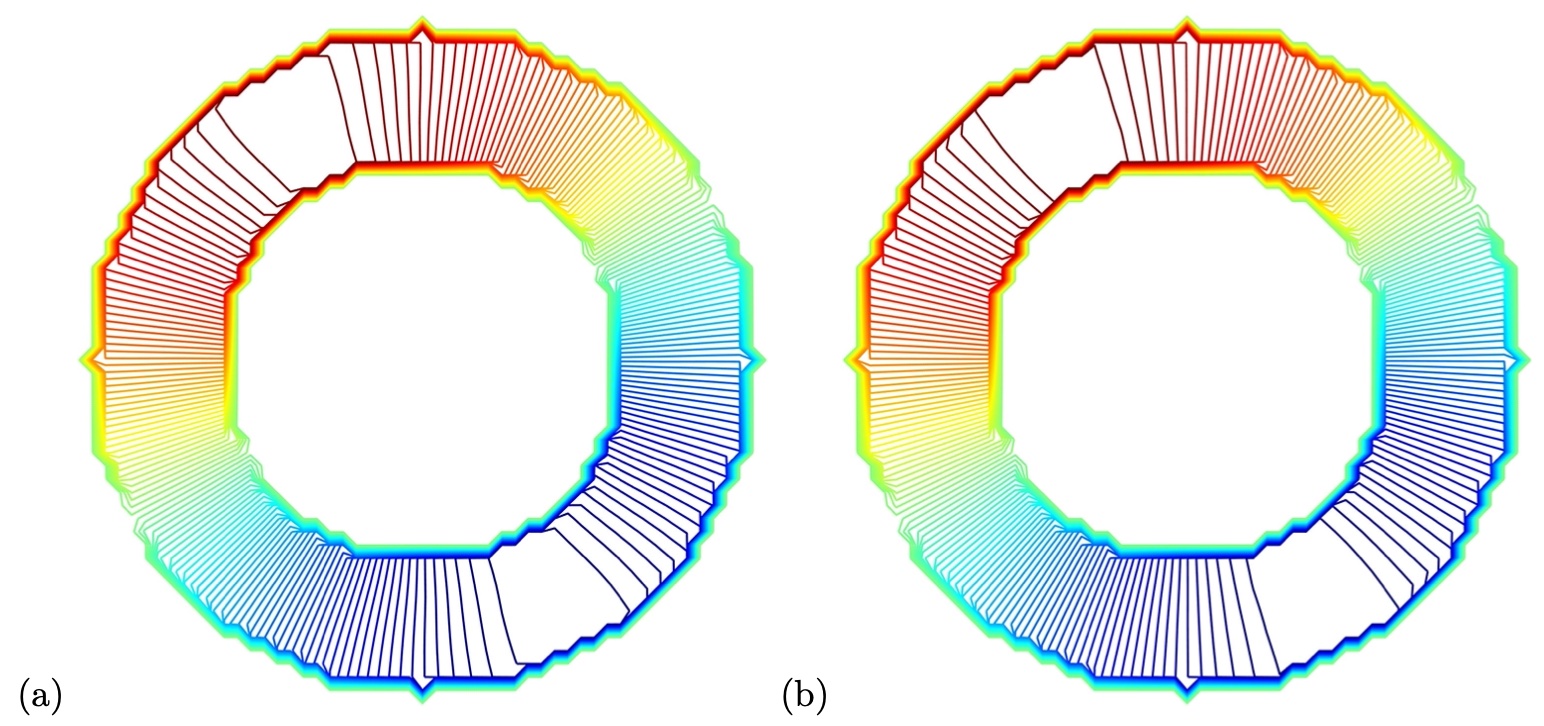}
    \caption{(Section \ref{SubSec:ExAdvectionCircle} Advection equation on the unit circle) Contour plot\reminder{s} of the numerical solutions computed using \reminder{\reminder{(a) the first- and (b) the third-order methods}s}. We choose $t = 0.5$ and $\Delta x = 0.05$.}
    \label{fig: Contour Advection on circle}
\end{figure}


\begin{figure}[!ht]
    \centering
    \includegraphics[width=1\textwidth]{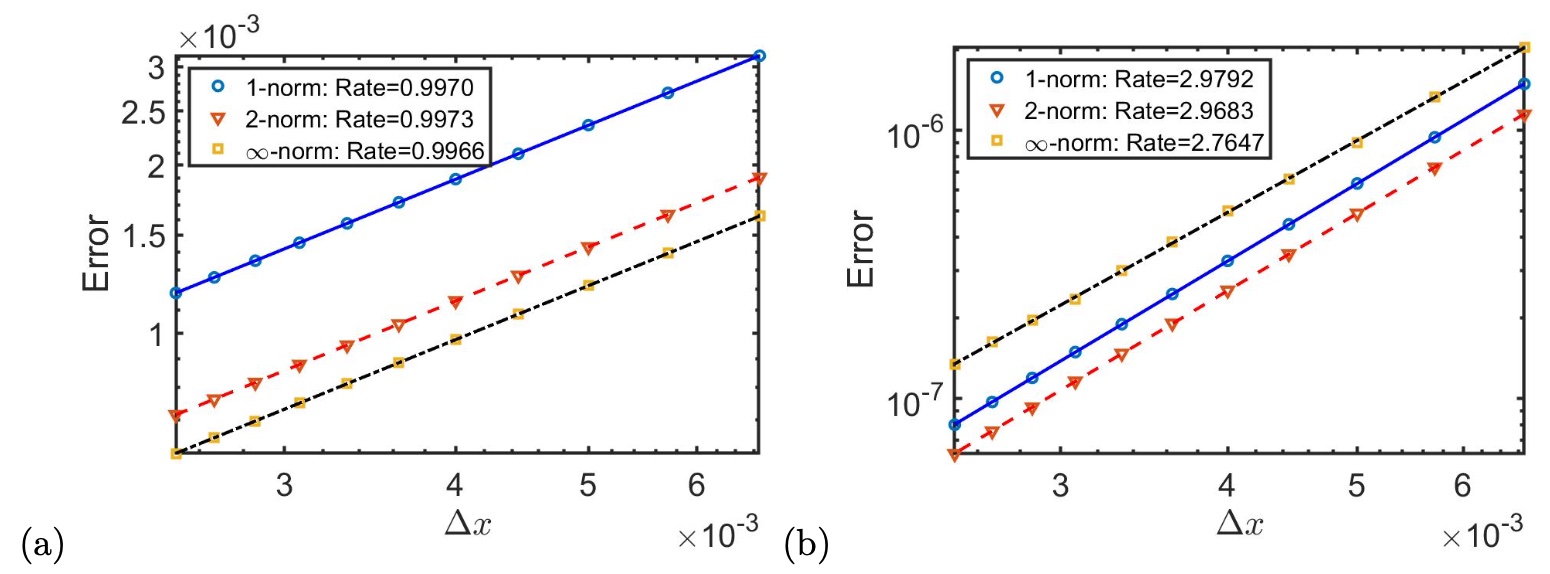}
   \caption{ (Section \ref{SubSec:ExAdvectionCircle} Advection equation on the unit circle) The numerical errors in the solutions computed using \reminder{\reminder{(a) the first- and (b) the third-order methods}s}. The final time is chosen to be $t = 0.5$.}
    \label{fig: CircleAdvection}
\end{figure}

First, we consider the advection equation on the unit circle centered at the origin, $u_t + u_\theta = 0$, where $\theta$ is the polar angle. The initial condition is given by $u_0(\theta) = \sin{\theta}$. This problem can be regarded as the advection on the $[0, 2\pi]$ interval with periodic boundary conditions. The velocity field is given by $\vec{V}(\vec{x}) = (-\sin\theta, \cos\theta)$, and the modified velocity field $\Tilde{\vec{V}}$ can be computed by left-multiplying the push-forward matrix.

Figure \ref{fig: Contour Advection on circle} shows the contour plots of the numerical solutions using our proposed approach. We see that the solutions are constant along normal directions as desired. In Figure \ref{fig: CircleAdvection}, we show the numerical errors in our solutions computed on various meshes. We can see that the convergence rates of both errors are approximately $1$ and $3$ when using the first- and the third-order methods, respectively.


\subsection{Advection Equation on an Ellipse}
\label{SubSec:ExAdvectionEllipse}

\begin{figure}[!ht]
    \centering
    \includegraphics[width=\textwidth]{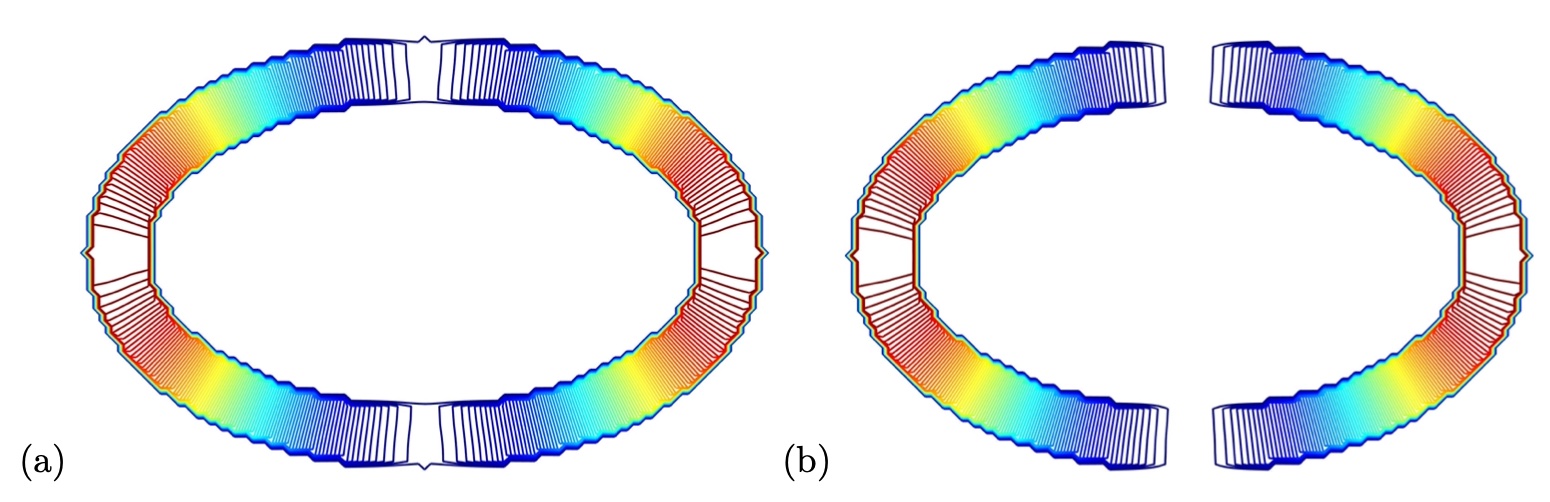}
    \caption{(Section \ref{SubSec:ExAdvectionEllipse} Advection equation on an ellipse) Contour plot\reminder{s} of the numerical solutions computed using \reminder{(a) the first- and (b) the third-order methods}. We choose $t = L/4$ and $\Delta x = 0.025$.}
    \label{fig: Contour Advection on ellipse}
\end{figure}

\begin{figure}[!ht]
    \centering
     \includegraphics[width=\textwidth]{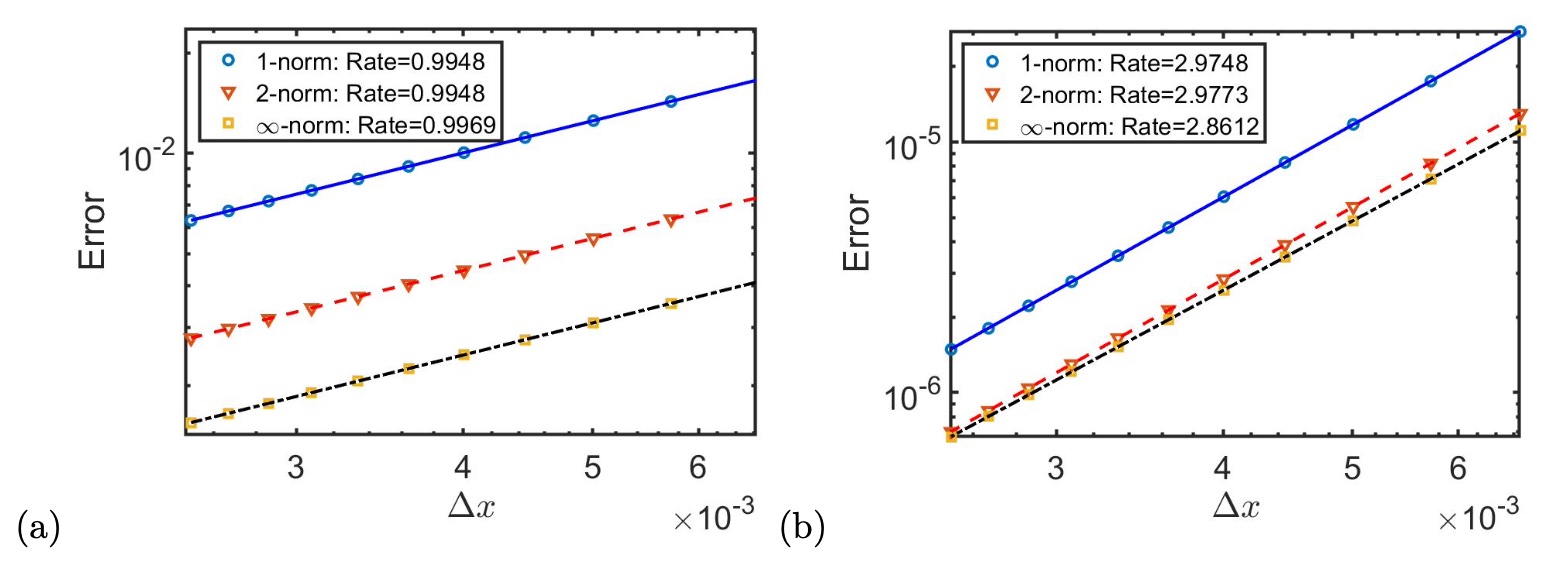}
   \caption{ (Section \ref{SubSec:ExAdvectionEllipse} Advection equation on \reminder{an} ellipse) The numerical errors in the solutions computed using \reminder{(a) the first- and (b) the third-order methods}. The final time is chosen to be $t = L/4$.}
    \label{fig: EllipseAdvection}
\end{figure}

Next, we consider \reminder{an} ellipse given by $\sqrt{x^2/a^2 + y^2/b^2} = 1$ with $a = 0.75$ and $b=1.25$ and solve the advection equation $u_t + u_{s} = 0$ on the interface where $s$ is the arclength of the ellipse. Since the above representation does not give a signed distance function, we first let $f(\theta;x,y) = \frac{1}{2} \| (x,y) - (a\cos \theta, b \sin \theta)\|^2$ and determine $\theta^*$ such that $\theta^*=\mbox{argmin}_{\theta} f(\theta;x,y)$. Then, the signed distance function representation is given by
$$
\phi(x,y)=\sign{\sqrt{x^2/a^2 + y^2/b^2} - 1} \| (x,y) - (a\cos \theta^*, b \sin \theta^*)\| \, .
$$

We pick the initial condition $u_0(s) = \cos^2 (2\pi s/L)$ as in \cite{ruuth2008simple}, where $L$ is the perimeter of the ellipse. The embedding problem is given by
$$
u_t + \left[I - \phi(\vec{x})H(\vec{x})\right]^{-1}\vec{V}(\vec{x})\cdot \grad u = 0, \mbox{ where }
\vec{V}(\vec{x}) = \left(-\frac{y}{b^2}, \frac{x}{a^2}\right)/\sqrt{\frac{x^2}{a^4}+\frac{y^2}{b^4}}
$$
indicates the advection directions. Figure \ref{fig: Contour Advection on ellipse} shows contour plots for the advection equation on the ellipse, and the numerical solutions are constant along the normal directions of the ellipse. Figure \ref{fig: EllipseAdvection} shows the $L^1$-, $L^2$-, and $L^\infty$-convergence rates of the numerical solutions computed using the first- and the third-order methods, which are approximately $1$ and $3$, respectively.


\subsection{Burgers' Equation on the Unit Circle}
\label{SubSec:ExBurgerCircle}

\begin{figure}[!ht]
\centering
\includegraphics[width=\textwidth]{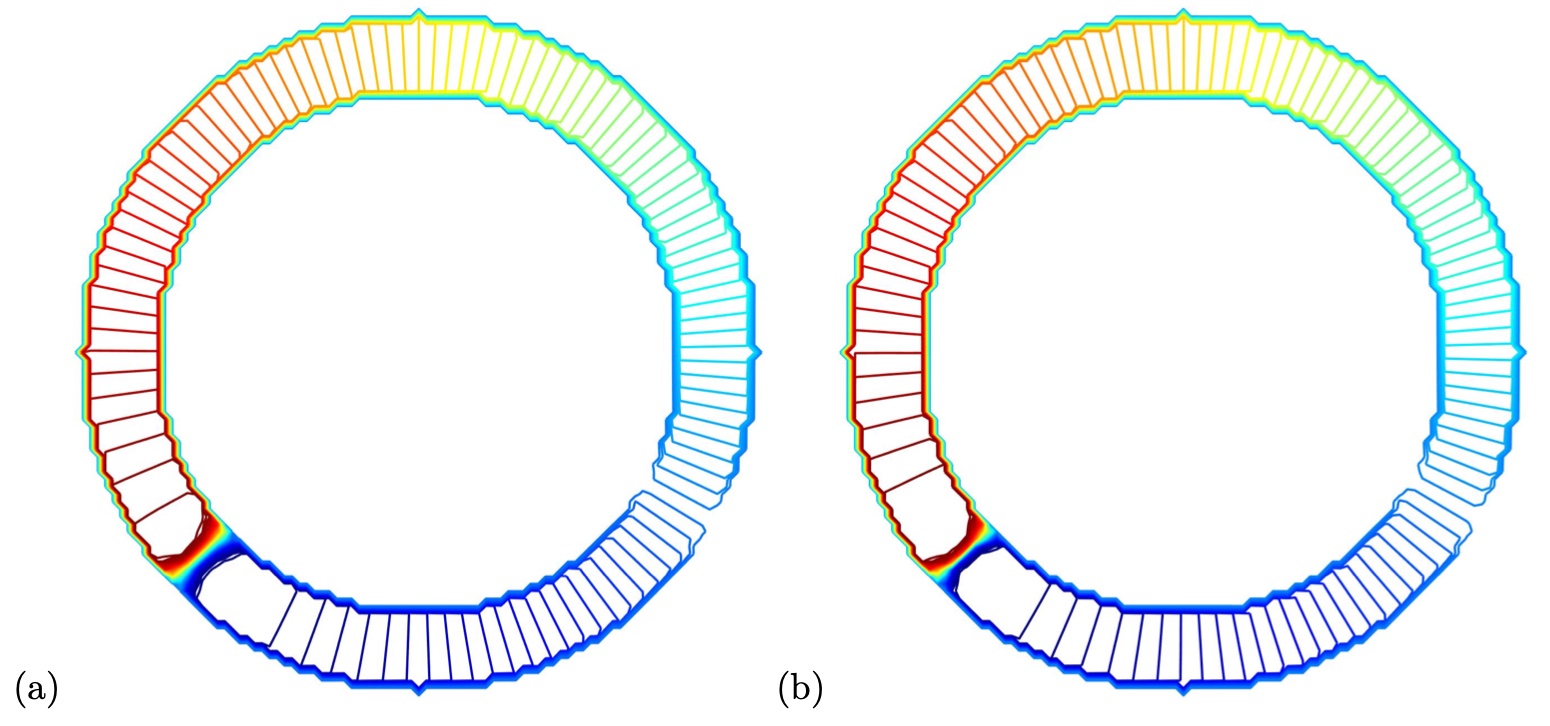} 
\caption{(Section \ref{SubSec:ExBurgerCircle} Burgers' equation on the unit circle) Contour plot\reminder{s} of the numerical solutions computed using \reminder{(a) the first- and (b) the third-order methods}. We choose $t = 1.5$ and $\Delta x = 0.025$.}

\label{Fig:BurgerCircle-Shock}
\end{figure}    

\begin{figure}[!ht]
    \centering
    \includegraphics[width=\textwidth]{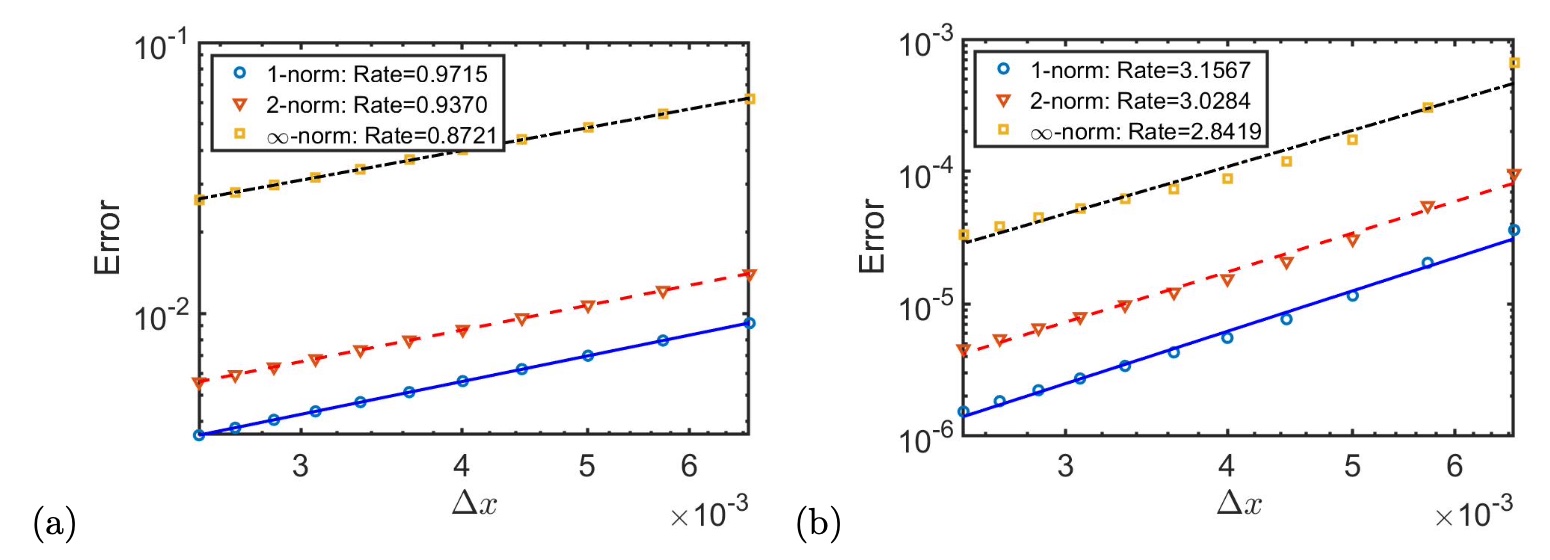}
    \caption{ (Section \ref{SubSec:ExBurgerCircle} Burgers' equation on the unit circle) The numerical errors in the solutions computed using \reminder{(a) the first- and (b) the third-order methods}. The final time is chosen to be $t = 0.9$.} 
    \label{fig: CircleBurgers}
\end{figure}   

We consider \reminder{Burgers' equation} on the unit circle, given by $u_t + \frac{1}{2}\left(u^2\right)_{\theta} = 0$. The problem can be reformulated as \reminder{Burgers' equation} on $[0,2\pi]$ with periodic boundary conditions, and therefore, we can write the solution implicitly as $u(\theta,t) = u_0(\theta - ut)$.

In the numerical example, we choose the initial condition $u_0(\theta) = \sin \theta + 0.5$. Even with this smooth initial condition, the solution develops a singularity, and the shock forms at $t_s = 1$. Figure \ref{Fig:BurgerCircle-Shock} shows the numerical solutions obtained by the first- and the third-order methods at the time $t=1.5$ computed on a mesh of $\Delta x = 0.025$. We see that these solutions are constant along the normal directions. The solution develops a shock at time $t>1$. The proposed method can capture the discontinuity, and the resolution improves as the order of the numerical method increases. To demonstrate the accuracy of our numerical solution, we compute the numerical solution at $t=0.9$ when the profile is still smooth. Figure \ref{fig: CircleBurgers} shows the numerical errors measured in various norms. We see that the first- and the third-order methods converge at rates of $1$ and $3$, respectively.



\begin{figure}[!ht]
    \centering
    \includegraphics[width=\textwidth]{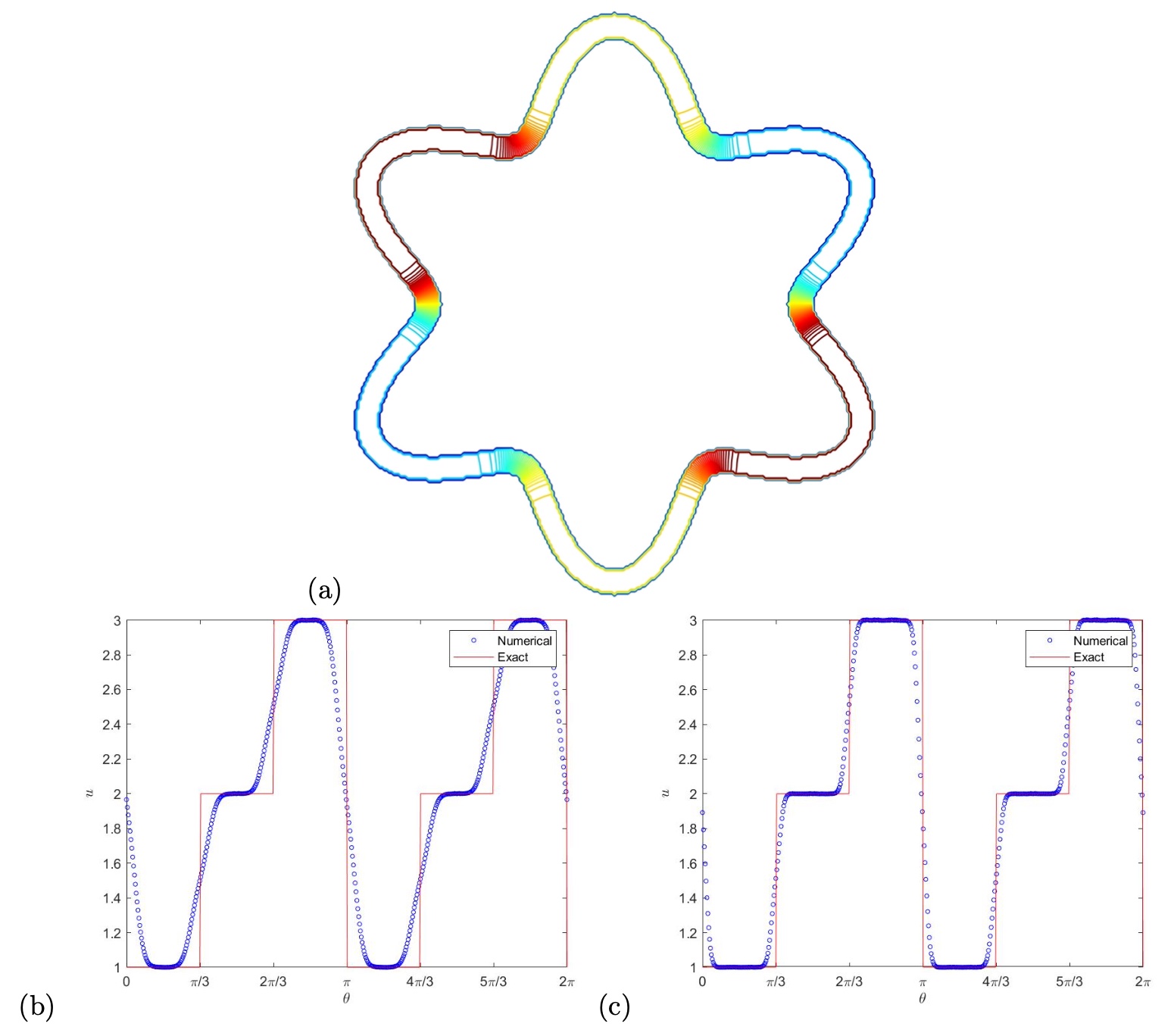}
    \caption{(Section \ref{SubSec:ExAdvectionStar} Advection on a star curve) (a) Contour plot of the numerical solution obtained by the third-order method. We also interpolate the numerical solutions on the curve computed using (b) the first- and (c) the third-order methods. We choose $t = L$ and $\Delta x = 0.0125$.}
    \label{Fig:AdvectionStar}
\end{figure}

\subsection{Advection on a Star Curve}
\label{SubSec:ExAdvectionStar}

In this example, \reminder{we consider a star curve given by the polar equation:} $r(\theta) = r_0 + a\sin^2{(b\theta)}$, where $r_0 = 1, a = 0.5$ and $b = 3$. The corresponding level set function is given by $\phi(x,y) = \sqrt{x^2 + y^2} - r_0 - a\sin^2{(b\arctan{(y/x)} )}$. \reminder{Since} $\phi$ is not a signed distance function, we first compute the closest-points of grids and define $\Tilde{\phi}(x,y) = \sign{\phi(x,y)}\|(x,y) - P_{\Gamma}(x,y)\|$. The tangent vector of the star curve can be computed by differentiating $(r(\theta)\cos{\theta}, r(\theta)\sin{\theta})$, where $\theta = \arctan(y/x)$. Hence, we define the velocity field $\vec{V}(x,y)$ as
\[
\vec{V}(x,y) :=
\frac{
\left(-y + ab\sin{(2b\theta)}\cos{\theta}, x + ab\sin{(2b\theta)}\sin{\theta} \right)
}{
\|\left(-y + ab\sin{(2b\theta)}\cos{\theta}, x + ab\sin{(2b\theta)}\sin{\theta} \right)\|
}
\]
for $(x,y) \in \Gamma$. The embedding advection equation is $u_t + \left[I - \phi(\vec{x})H(\vec{x})\right]^{-1}\vec{V}(x,y)\cdot \grad u = 0$. In contrast to previous examples, we consider an initial condition with discontinuities, given by
\begin{equation*}
u_0(\theta) =
\begin{cases}
1, & \theta \in [0, \pi/3) \cup [\pi, 4\pi/3)\\
2, & \theta \in [\pi/3, 2\pi/3) \cup [4\pi/3, 5\pi/3)\\
3, &\theta \in [2\pi/3, \pi) \cup [5\pi/3, 2\pi) \, .
\end{cases}
\end{equation*}
At $t = L$, where $L \approx 10.1964221$ is the perimeter of the star curve, the above advection is a counter-clockwise rotation about the origin by $2\pi$ radians. From Figure \ref{Fig:AdvectionStar}, we can observe that the numerical solutions are constant along the normal directions and match the exact solution nicely.


\subsection{Advection on a Torus}
\label{SubSec:ExAdvectionTorus}

\begin{figure}[!ht]
    \centering
    \includegraphics[width=\textwidth]{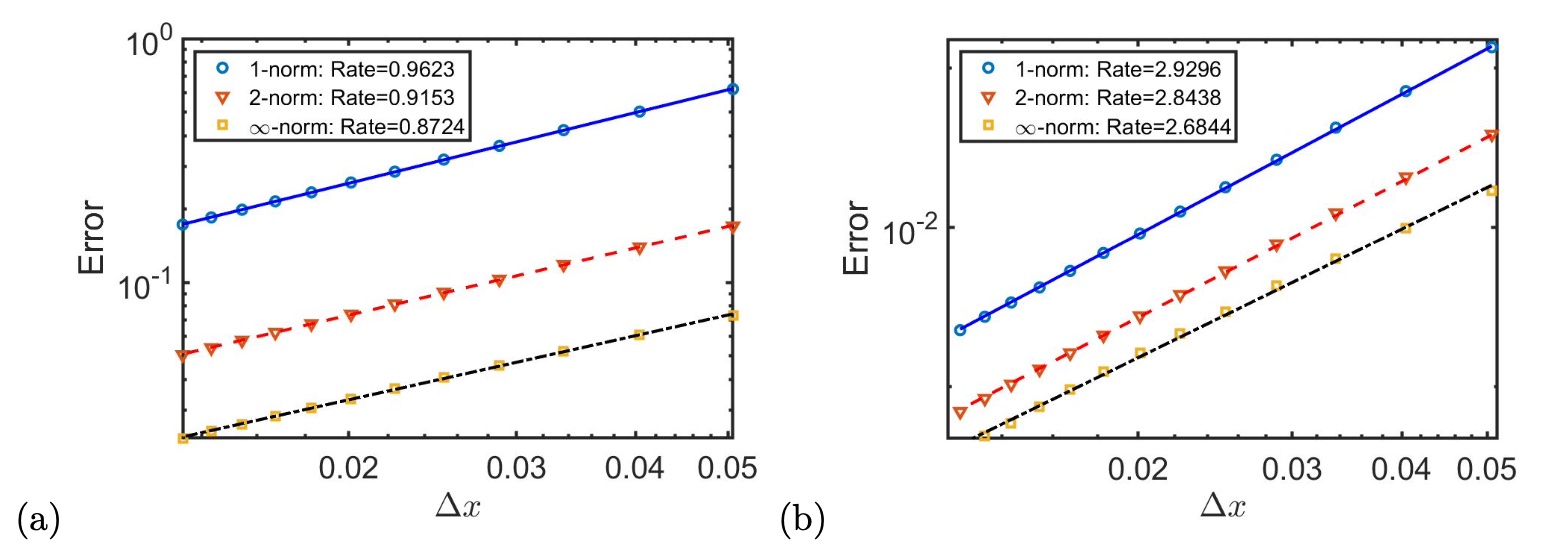}
    \caption{ (Section \ref{SubSec:ExAdvectionTorus} Advection equation on \reminder{a} torus) The numerical errors in the solutions computed using \reminder{(a) the first- and (b) the third-order methods}. The final time is chosen to be $t = 1$.}
    \label{fig: TorusAdvection}
\end{figure}

\begin{figure}[!ht]
\centering
\includegraphics[width=\textwidth]{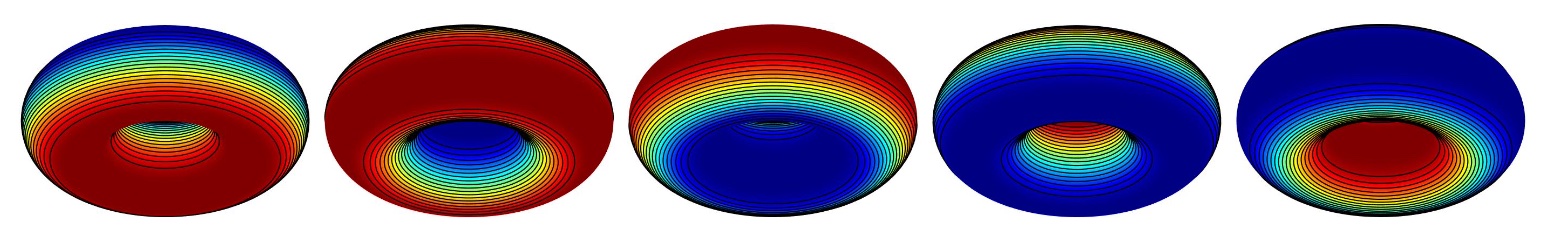}
\caption{(Section \ref{SubSec:ExAdvectionTorus} Advection on a torus) 
Numerical solutions at $t = 2\pi/5$, $4\pi/5$, $6\pi/5$, $8\pi/5$ and $2\pi$ computed using the third-order method.} 
\label{Fig:AdvectionTorus_Solution}
\end{figure}

Consider a torus in $\R^3$ given by the parametrization
$$
(x, y, z) = \left( (R + r\cos\eta)\cos\theta, (R + r\cos\eta)\sin\theta, r\sin\eta \right)
$$
where $\theta, \eta \in [-\pi, \pi]$, and $R = 1, r = 0.5$. The corresponding signed distance function is
$
\phi(x,y,z) = \left[z^2 + \left(\sqrt{x^2 + y^2 } - R\right)^2 \right]^{\frac{1}{2}} - r \, .
$
Compared to the unit sphere, \reminder{this} torus has two different principal curvatures. It is, therefore, a more challenging example to test the performance of our algorithm. Similar to \cite{greer2006improvement,ruuth2008simple}, we solve the advection equation $u_t + u_{\eta} = 0$ with the initial condition
\begin{equation*}
u(\theta, \eta, 0) = f(\eta) =
\begin{cases}
g\left(\frac{\pi + \eta}{\pi}\right), \quad \eta \in [-\pi, 0],\\
g\left(\frac{\pi - \eta}{\pi}\right), \quad \eta \in (0, \pi],
\end{cases}
\end{equation*}
where $g(x) := \left( e^{\frac{1}{x-1}} - e^{\frac{1}{x}} \right) / \left( e^{\frac{1}{x-1}} + e^{\frac{1}{x}} \right)$.
The exact solution is given by $u(\theta, \eta, t) = f(\eta - t)$. In Figure \ref{fig: TorusAdvection}, we can observe the first- and the third-order convergence in both errors when using the first- and the third-order methods, respectively. We have plotted the solution at different times in Figure \ref{Fig:AdvectionTorus_Solution}. The solution well captures the evolution of the smooth profile.

\subsection{Burgers' Equation on the Unit Sphere}
\label{SubSec:ExBurgerSphere}

\begin{figure}[!ht]
    \centering
    \includegraphics[width=\textwidth]{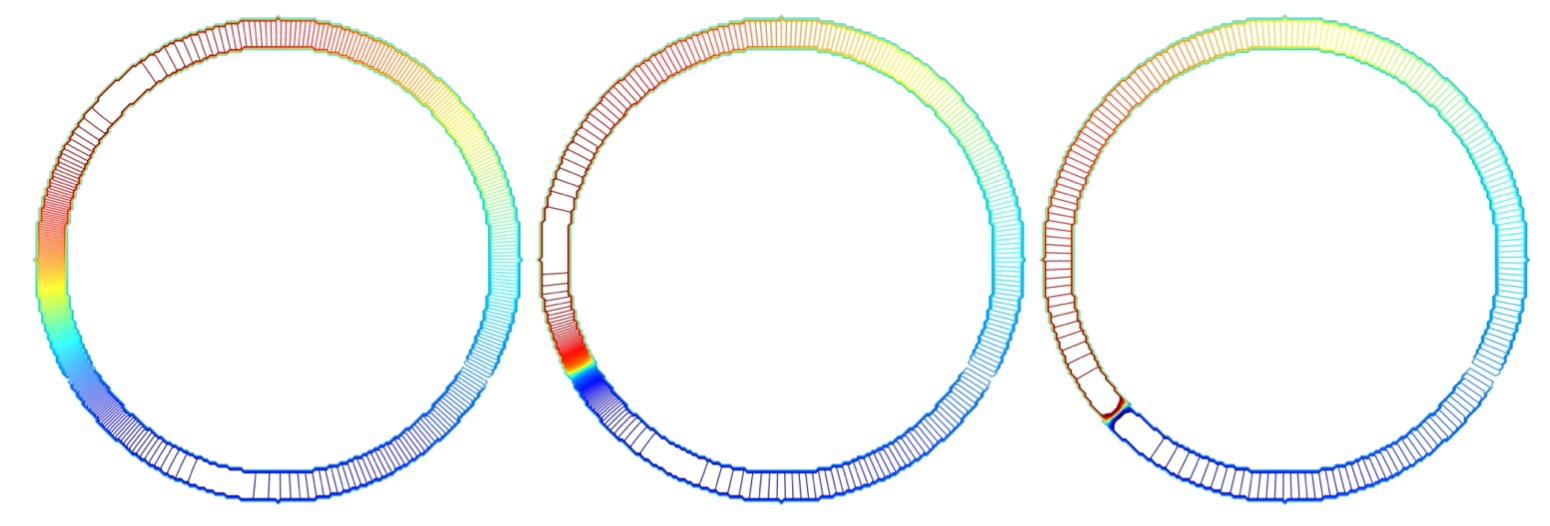} 
    \caption{(Section \ref{SubSec:ExBurgerSphere} Burgers' equation on the unit sphere with the initial condition $u_1$) Contour plot\reminder{s} of the numerical solutions on $z = 0$ obtained by the third-order method at $t=0.5, 1$ and $1.5$.}
    \label{Fig:BurgerSphere}
\end{figure}
    
\begin{figure}[!ht]
    \centering
    \includegraphics[width=\textwidth]{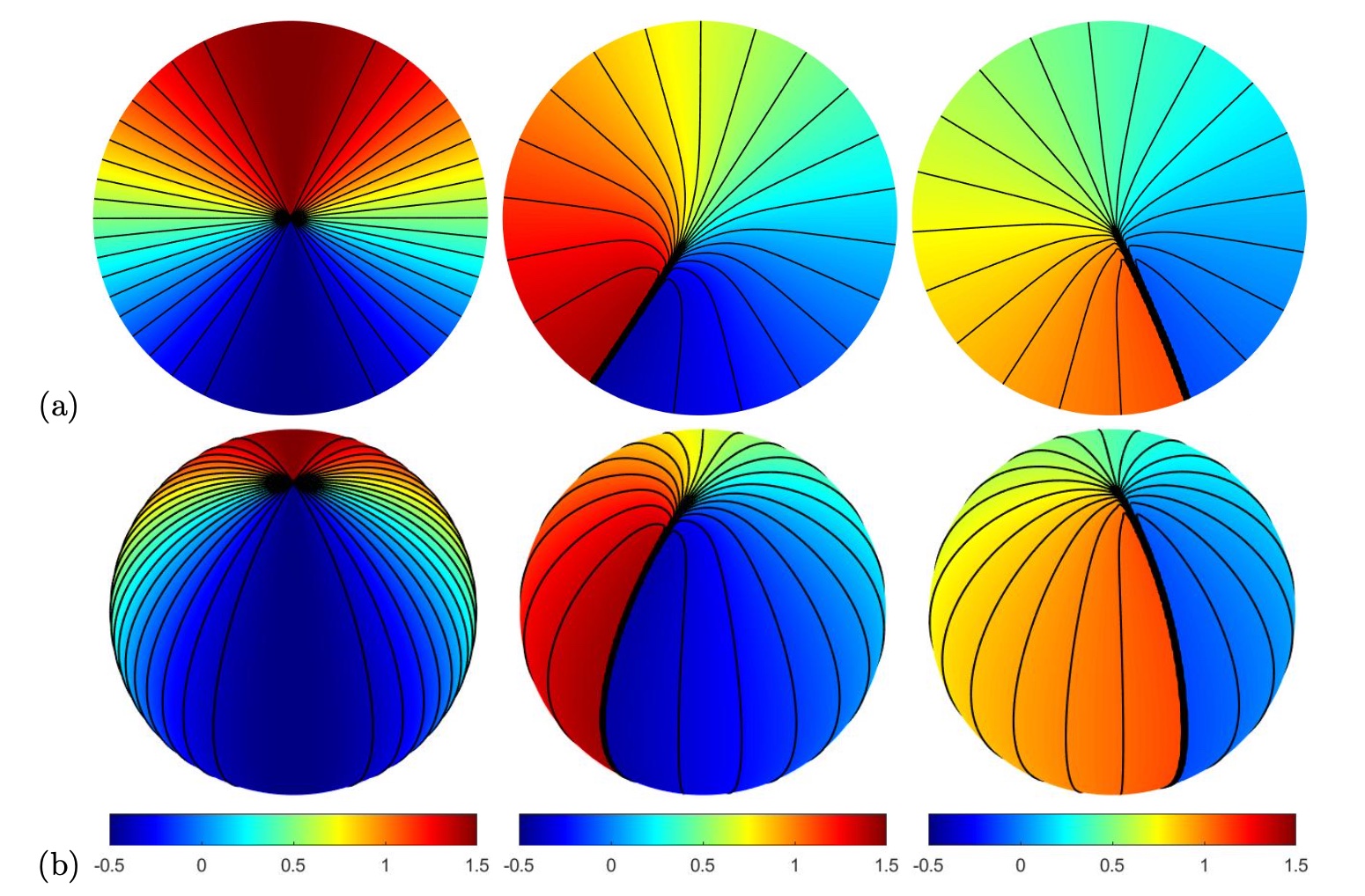} 
    \caption{(Section \ref{SubSec:ExBurgerSphere} Burgers' equation on the unit sphere with the initial condition $u_1$, WENO3) Numerical solutions at $t = 0, 2$ and $4$ with $u_1$ as the initial condition and $\Delta x=0.0125$. (a-b) Solutions observed from different angles. }
    \label{Fig:BurgerSphereShock_Solution_0.0125_WENO}
\end{figure}

\begin{figure}[!ht]
\centering
\includegraphics[ width=\textwidth]{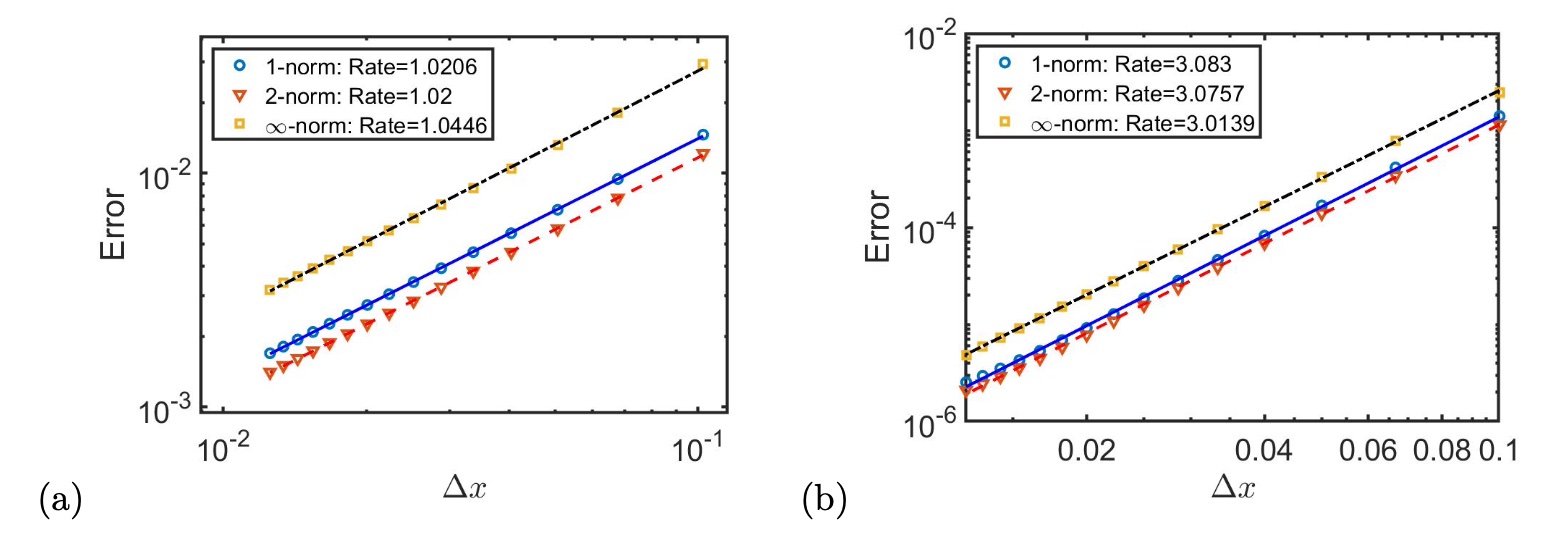}
\caption{ (Section \ref{SubSec:ExBurgerSphere} Burgers' equation on the unit sphere with the initial condition $u_1$) The numerical errors in the solutions computed using \reminder{(a) the first- and (b) the third-order methods}. The final time is chosen to be $t = 0.5$}
\label{fig:BurgerSphereConvergence}
\end{figure}

\begin{figure}[!ht]
    \centering
    \includegraphics[width=\textwidth]{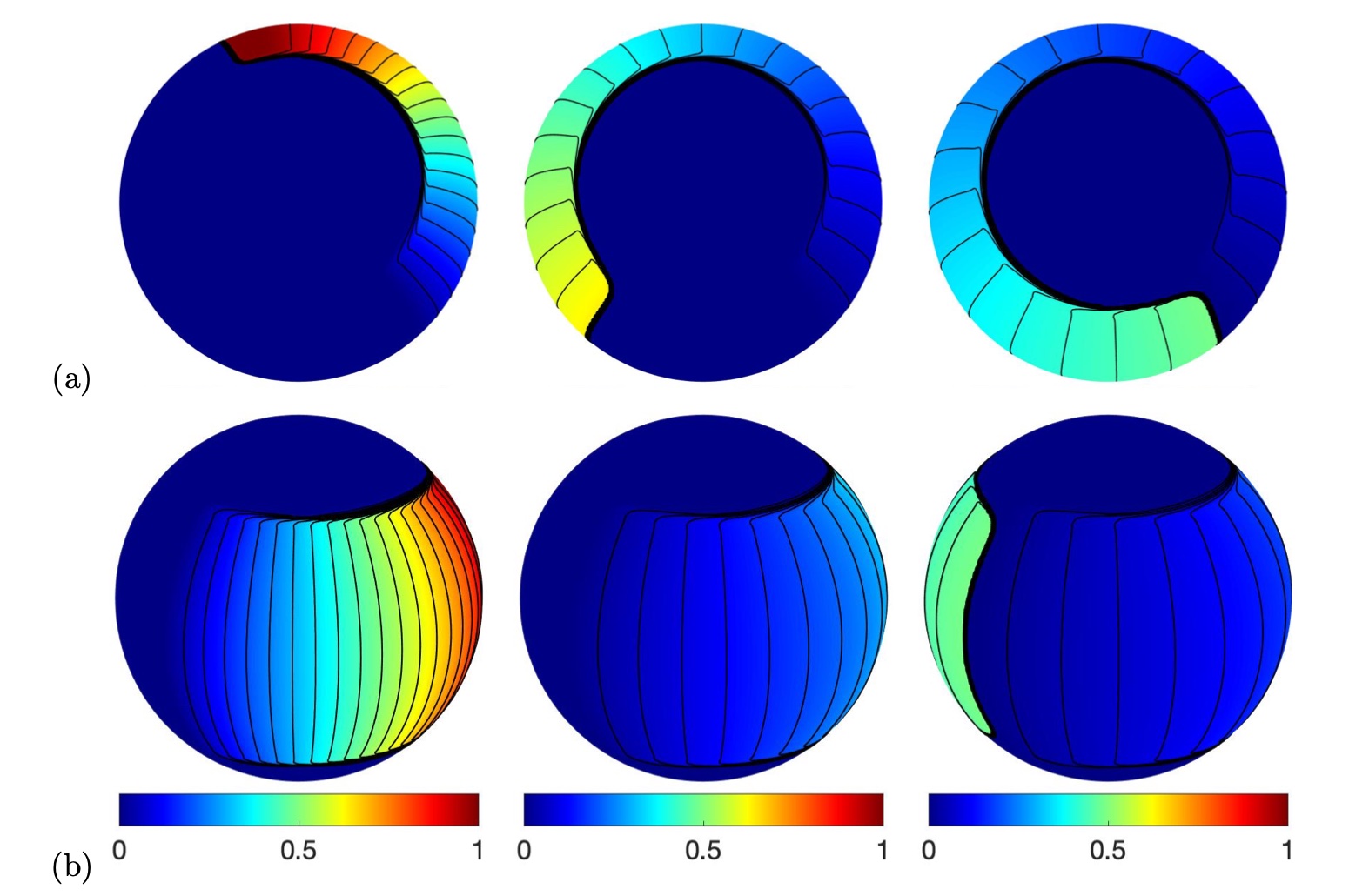} 
    \caption{(Section \ref{SubSec:ExBurgerSphere} Burgers' equation on the unit sphere with the initial condition $u_2$, WENO3) Numerical solutions at $t = 4\pi/5, 12\pi/5$ and $4\pi$ with $u_2$ as the initial condition and $\Delta x=0.0125$. (a-b) Solutions observed from different angles.}
    
    \label{Fig:BurgerSphereBox_Solution_0.0125_WENO}
    \end{figure}

\begin{figure}[h!]
\centering
\includegraphics[width=\textwidth]{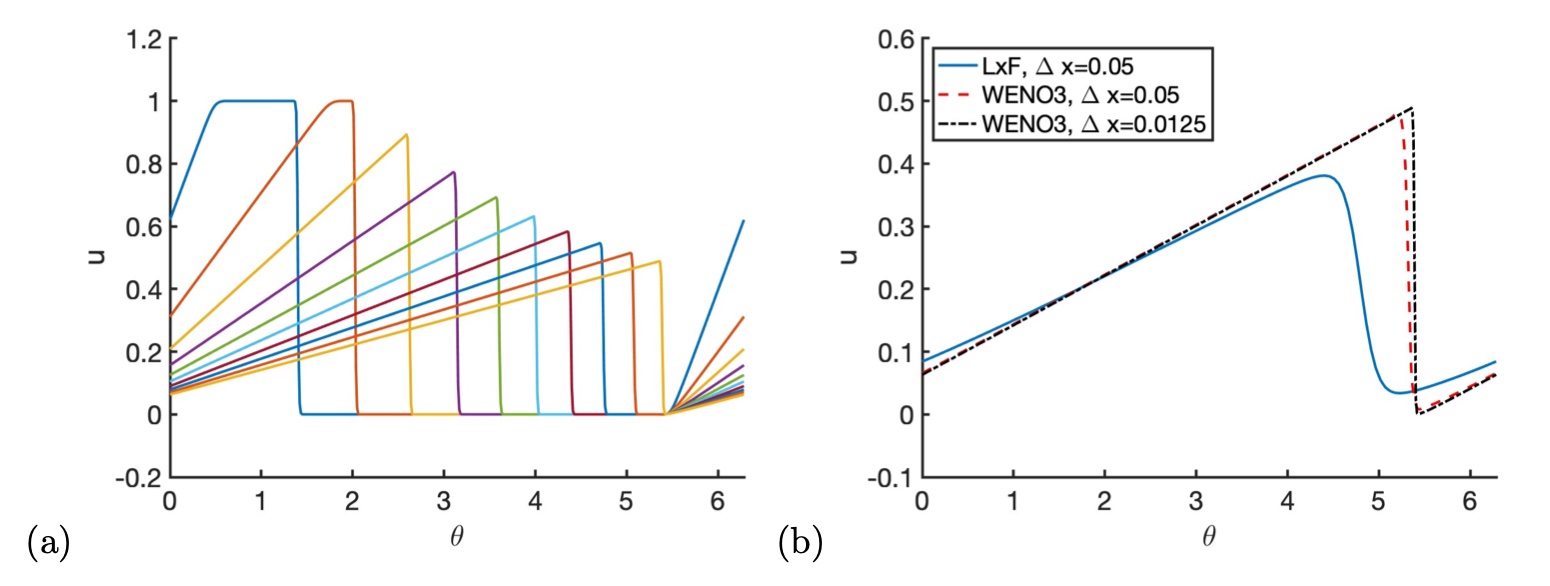}
\caption{(Section \ref{SubSec:ExBurgerSphere} Burgers' equation on the unit sphere with the initial condition $u_2$) (a) Cross\reminder{-}sections on $z=0$ at $t = \pi/5$ to $2\pi$ with an increment of $\pi/5$. The solution is computed on the mesh $\Delta x=0.0125$ using the third-order WENO method. (b) Comparison of the solutions at $t=2\pi$ on $z=0$ obtained by the first-order LxF method with $\Delta x =0.05$, the third-order WENO method with $\Delta x=0.05$ and $\Delta x=0.0125$.}

\label{Fig:BurgerSphereBox_Solution_CrossSection}
\end{figure}

\begin{figure}[!ht]
\centering
\includegraphics[width=\textwidth]{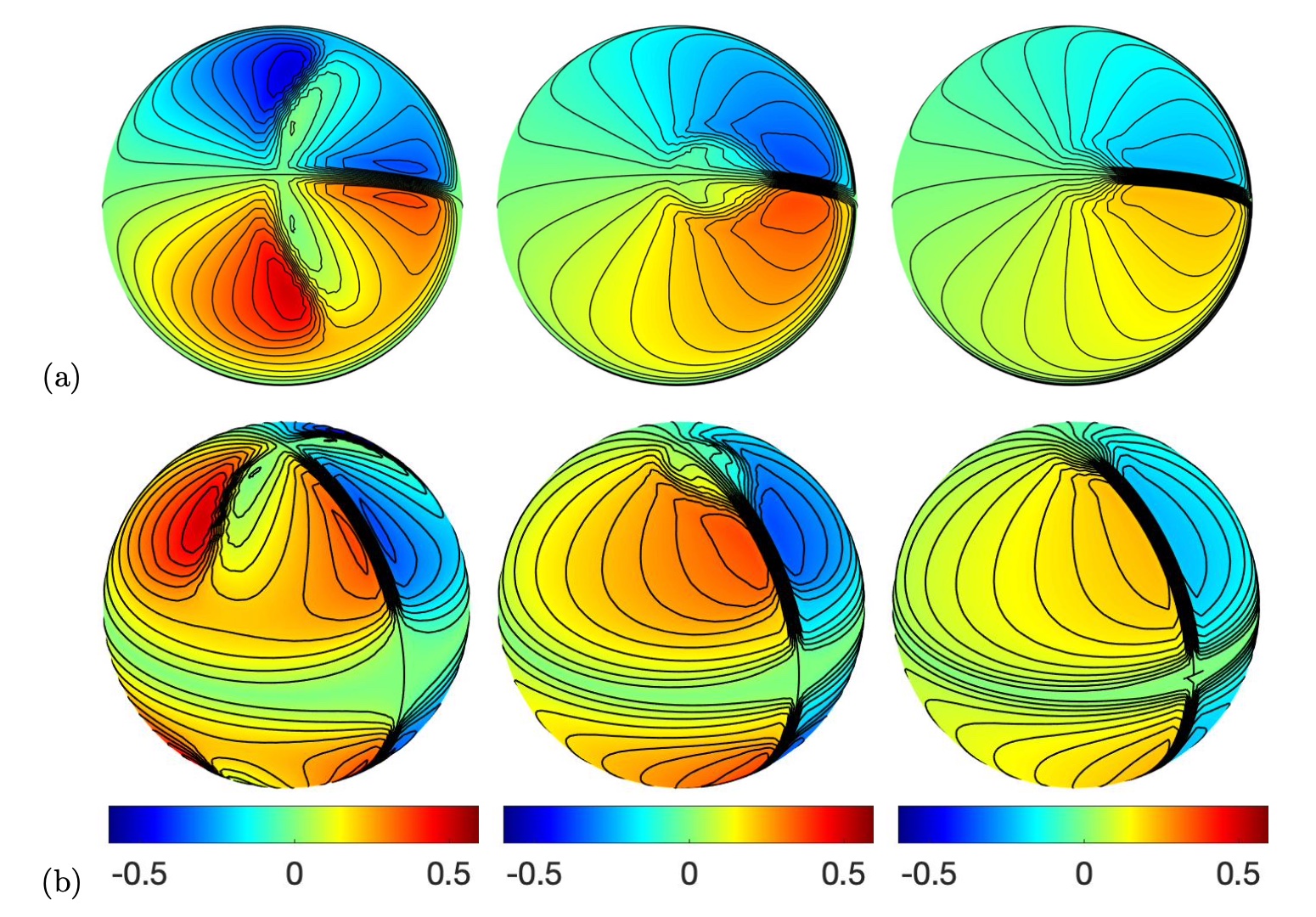} 
\caption{(Section \ref{SubSec:ExBurgerSphere} Burgers' equation on the unit sphere with the initial condition $u_3$, WENO3) Numerical solutions at $t = 4\pi/5, 12\pi/5$ and $4\pi$ with $\Delta x=0.05$. (a-b) Solutions observed from different angles.}
\label{Fig:BurgerSphereSH_Solution_05}
\end{figure}

\begin{figure}[!ht]
\centering
\includegraphics[width=\textwidth]{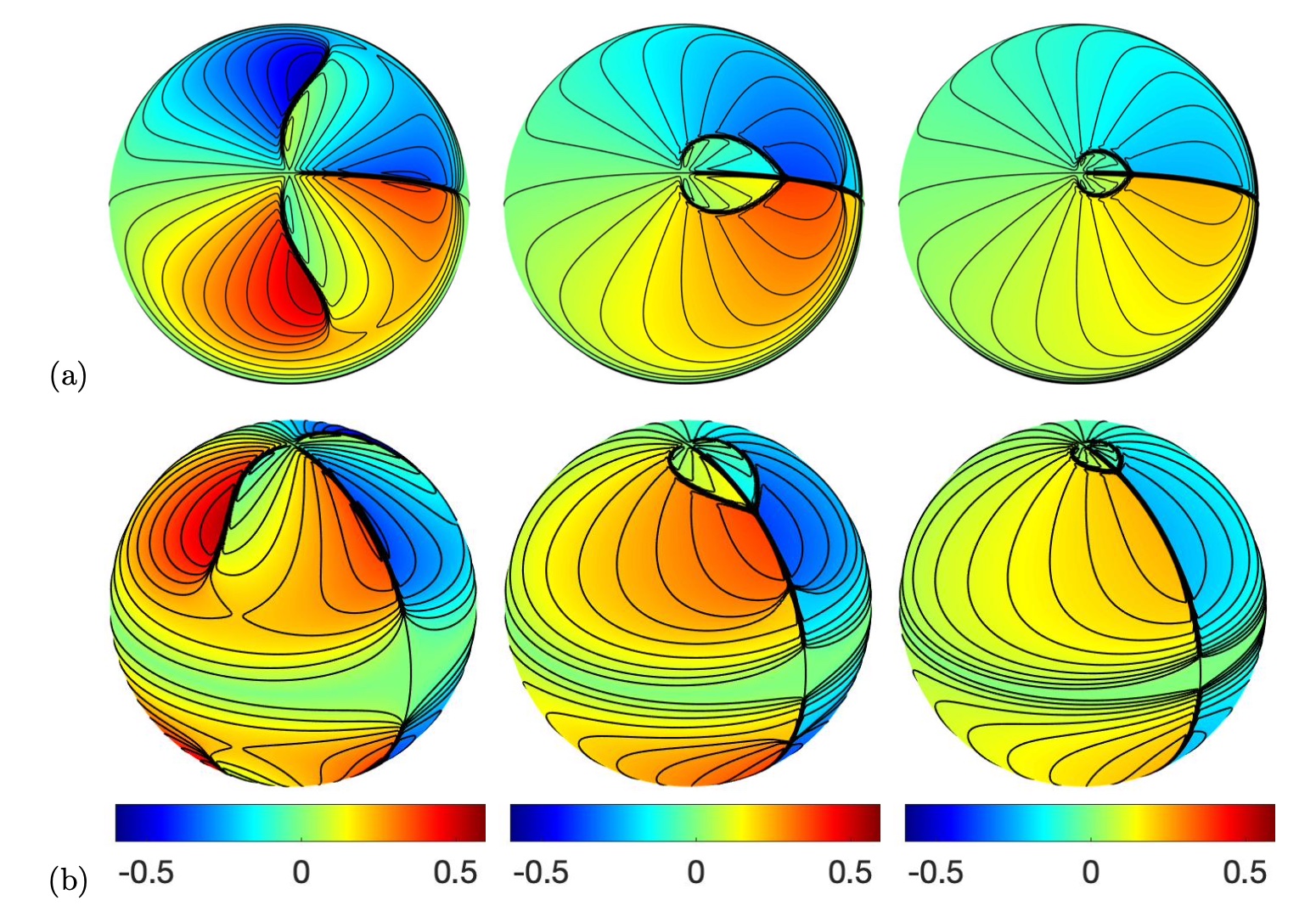} 
\caption{(Section \ref{SubSec:ExBurgerSphere} Burgers' equation on the unit sphere with the initial condition $u_3$, WENO3) Numerical solutions at $t = 4\pi/5, 12\pi/5$ and $4\pi$ with $\Delta x=0.0125$. (a-b) Solutions observed from different angles.
}
\label{Fig:BurgerSphereSH_Solution_0125}
\end{figure}

We consider \reminder{Burgers' equation} posed on the unit sphere, given by $u_t + \left(\frac{1}{2}u^2\right){\theta} = 0$, where $\theta$ is the azimuthal angle, with three different initial conditions:
\begin{itemize}
\item $u_1(\theta, \varphi, 0) = \sin\theta+0.5$,
\item $u_2$ is the characteristic function of a \textit{box} defined by $|\varphi|<\pi/4$ and $|\theta|<\pi/4$ where $\varphi$ and $\theta$ are the polar and the azimuthal angle in the spherical coordinate, respectively, and
\item $u_3$ is a combination of two spherical harmonic modes given by $Y_{2, -1}+Y_{4,-3}$.
\end{itemize}
The first initial condition provides a relatively simple test example so that we can construct an exact solution. Even though the solution develops a shock later, we can use the short-time solution to test the convergence of the numerical schemes. The second example is slightly more complicated, showing the interaction of a shock and a rarefaction wave. The third initial condition $u_3$ is the most challenging one. We observe the interaction of multiple shocks.

Figure \ref{Fig:BurgerSphere} shows the cross-\reminder{section} of the numerical solution at various times with the initial condition given by $u_1$ on the $z=0$ plane. The solution on the sphere is shown in Figure \ref{Fig:BurgerSphereShock_Solution_0.0125_WENO}. We see that the third-order method can well resolve the discontinuity, and the shock is sharp. We also observe that the solution is indeed constant along the normal directions of the surface. We also compute the error\reminder{s} in the solutions at $t=0.5$ obtained by various meshes to see the convergence rate\reminder{s}. From Figure \ref{fig:BurgerSphereConvergence}, we observe that the convergence rate\reminder{s} \reminder{are} roughly $1$ and $3$ for the first- and the third-order methods, respectively.

Figure \ref{Fig:BurgerSphereBox_Solution_0.0125_WENO} shows the numerical solutions with the initial condition given by $u_2$ and the mesh $\Delta x=0.0125$. Since the initial profile has a sharp discontinuity, the right boundary of the box moves as a shock towards the right-hand side. At the same time, the left border forms a rarefaction wave immediately. Because there is no flux across the upper and the boundary boundaries, these discontinuities should be stationary and do not move in the zenith direction (angle from the polar axis).

To compare these solutions more clearly, we extract the $z=0$ cross-section of the solution at various times in Figure \ref{Fig:BurgerSphereBox_Solution_CrossSection}(a). The rarefaction fan catches the solution slightly after $t=2\pi/5$, and the magnitude of the shock gradually reduces over time. In Figure \ref{Fig:BurgerSphereBox_Solution_CrossSection}(b), we plot all solutions at the final time $t=2\pi$. The accuracy of the third-order method is significantly better than that of the first-order LxF solution. The numerical scheme is highly dissipative.

Figures \ref{Fig:BurgerSphereSH_Solution_05}-\ref{Fig:BurgerSphereSH_Solution_0125} consider a more complicated case given by the initial condition $u_3$ with the mesh sizes $\Delta x=0.05$ and $\Delta x=0.0125$, respectively. Due to the symmetry in the spherical harmonic functions, the solution is symmetric across the equator. Therefore, we concentrate on the upper hemisphere of the unit sphere. Even though the initial condition is smooth, we observe that the solution develops shocks as it evolves in time. Two shocks move towards a stationary shock and interact with each other in a non-straightforward manner.

\subsection{Conservation of Mass}
\label{SubSec:ExBurgerConservation}


\begin{figure}[h!]
\centering
\includegraphics[width=\textwidth]{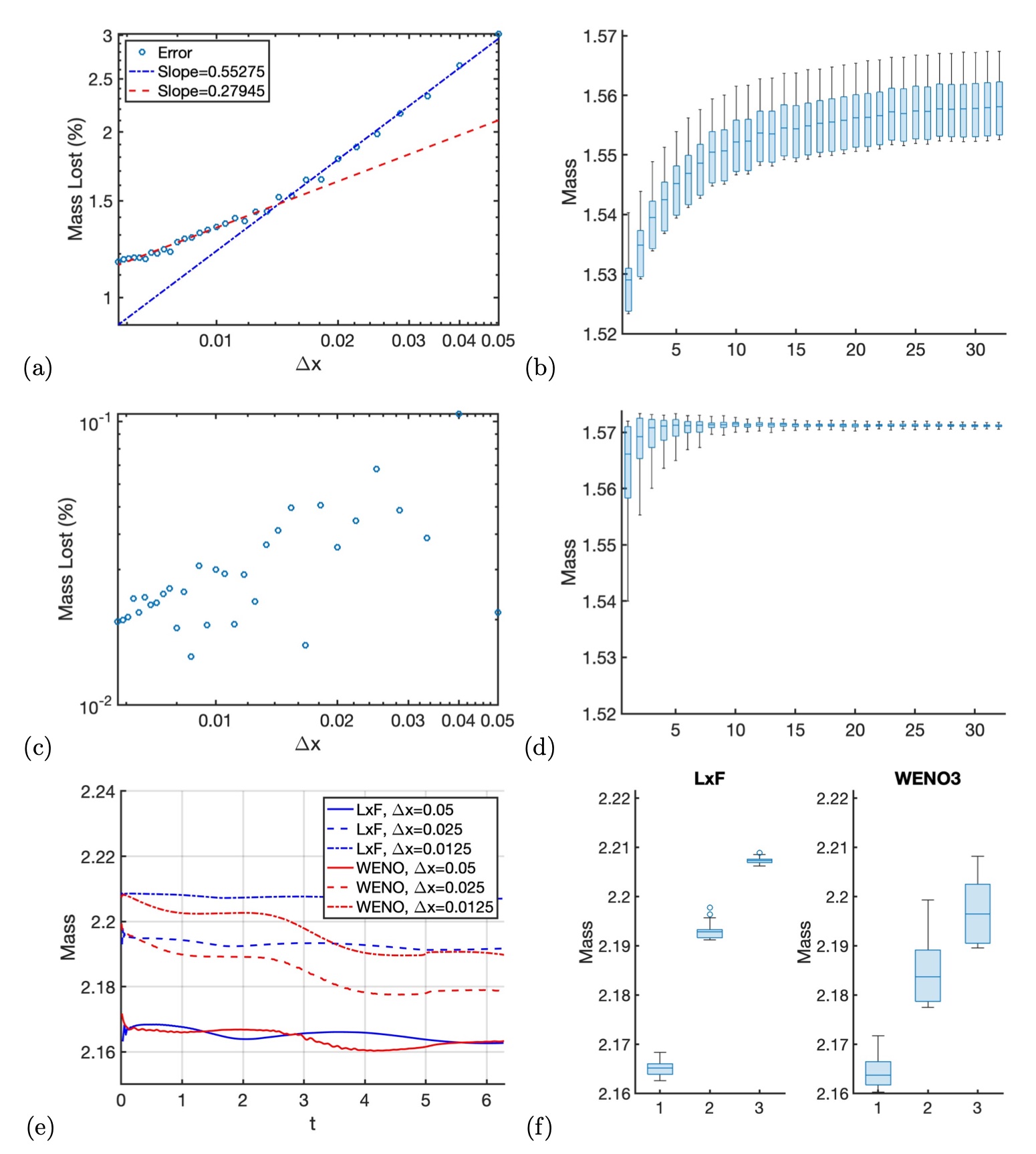}
\caption{
(Section \ref{SubSec:ExBurgerConservation} Conservation of mass in \reminder{Burgers' equation}) 
(a) The integral of $u$ on the unit circle at $t=2\pi$. 
(b) The change in the integral of $u$ on the unit circle. 
\reminder{(c-d) We repeat the setup in (a-b) but with the exact solution assigned in the outer tube.}
(e) The integral of $u$ on the unit sphere as a function of time. 
(f) The change in the integral of $u$ on the unit sphere. 
}
\label{Fig:ConservationMass}
\end{figure}

The conservation of mass is a fundamental property in nonlinear conservation laws. In this section, we aim to investigate the conservation of the solution $u$ in \reminder{Burgers' equation} on the circle and the sphere. Specifically, we are interested in determining how well the numerical solutions preserve the quantity on the interface.

To this end, we consider solving \reminder{Burgers' equation} on the unit circle using WENO3 with the initial condition $u(\theta)=1$ if $|\theta|\le \pi/4$ and $u(\theta)=0$ otherwise, as shown in Figure \ref{Fig:ConservationMass}(a) and (b). We apply high-order interpolation to the numerical solutions at different times on the unit circle and integrate the total mass. Figure \ref{Fig:ConservationMass}(a) shows the percentage of mass lost at the final time $t=2\pi$ with the numerical solutions computed on various meshes, $\Delta x=4/(n-1)$ with $n$ increasing from 81 to 701 in increments of 20. We compare the total mass in the numerical solution with the exact mass $\pi/2$.

The mass lost in WENO3 at the final time reduces as we refine the mesh. There could be two contributions to this error. The first one is the discretization error. The more grid points we use to sample the circle, the less discretization error we have. The second source comes from the flux through the grid interfaces. The mass lost in high-order finite difference schemes is not surprising, especially for problems with non-periodic boundary conditions \cite{dinshuzha20}, since the method requires information from several ghost points outside the computational domain. To see more clearly the variation in total mass, we have presented a box plot of the total mass as a function of time in Figure \ref{Fig:ConservationMass}(b). The $x$-axis represents different mesh sizes \reminder{$\Delta x=4/(n-1)$ with $n$ ranging from 81 ($x$-index equals 1) to 701 ($x$-index equals 32) in increments of 20}, while the $y$-axis shows the median, lower and upper quartiles, and the minimum and maximum values in the total mass on the circle that are not outliers. As we increase the mesh numbers, i.e., move along the $x$-axis, we observe that the average mass over the $ 2\pi$ time increases and approaches the exact mass $\pi/2$. But more interestingly, the range from the lower to the upper quartile seems to stay roughly the same. This observation implies a non-zero net flux on the computational boundary, which aligns with the results in \cite{ber87,dinshuzha20}. For our problem, the boundary of the computational tube does not align with the computational mesh, making the conservation of mass even more challenging. Nevertheless, we find that the change in the total mass is roughly 1-2\%. \reminder{To further demonstrate the major source of the mass lost, we replace the Neumann boundary condition with the exact solution in the outer tube. In Figure \ref{Fig:ConservationMass}(c-d), we show the mass lost at the final time as a function of the mesh size and also the box plot of the total mass as a function of time. When we have the exact solution as the boundary condition, we see a significant improvement in the mass loss problem. The change in the total mass is now reduced to the order of $O(10^{-2})$.}

We also re-examine the box initial condition $u_2$ as in Section \ref{SubSec:ExBurgerSphere} and solve \reminder{Burgers' equation} until $t=2\pi$. The exact total mass is given by $\pi/\sqrt{2}\approx 2.2214$. In Figure \ref{Fig:ConservationMass}(\reminder{e}), we show the change in the total mass with solutions computed using the LxF and the WENO3 schemes on different mesh sizes. Similar to the two-dimensional case, the mass approaches the exact mass as we refine the underlying mesh. Comparing the solutions from the LxF and the WENO3, we observe that LxF can better preserve the total mass since it does not require any ghost grid and does not need much information outside the computational boundary. This difference is more evident in the box plot in Figure \ref{Fig:ConservationMass}(\reminder{f}). The index in the $x$-axis corresponds to the solution computed using $n=81$ ($\Delta x=0.05$), 161 ($\Delta x=0.025$), and 321 ($\Delta x=0.0125$). Compared to the circle case, the change in the total mass seems reasonable for such small mesh points.

\subsection{Burgers' Equation on a Torus}
\label{SubSec:ExBurgerTorus}

\begin{figure}[!ht]
    \centering
    \includegraphics[width=\textwidth]{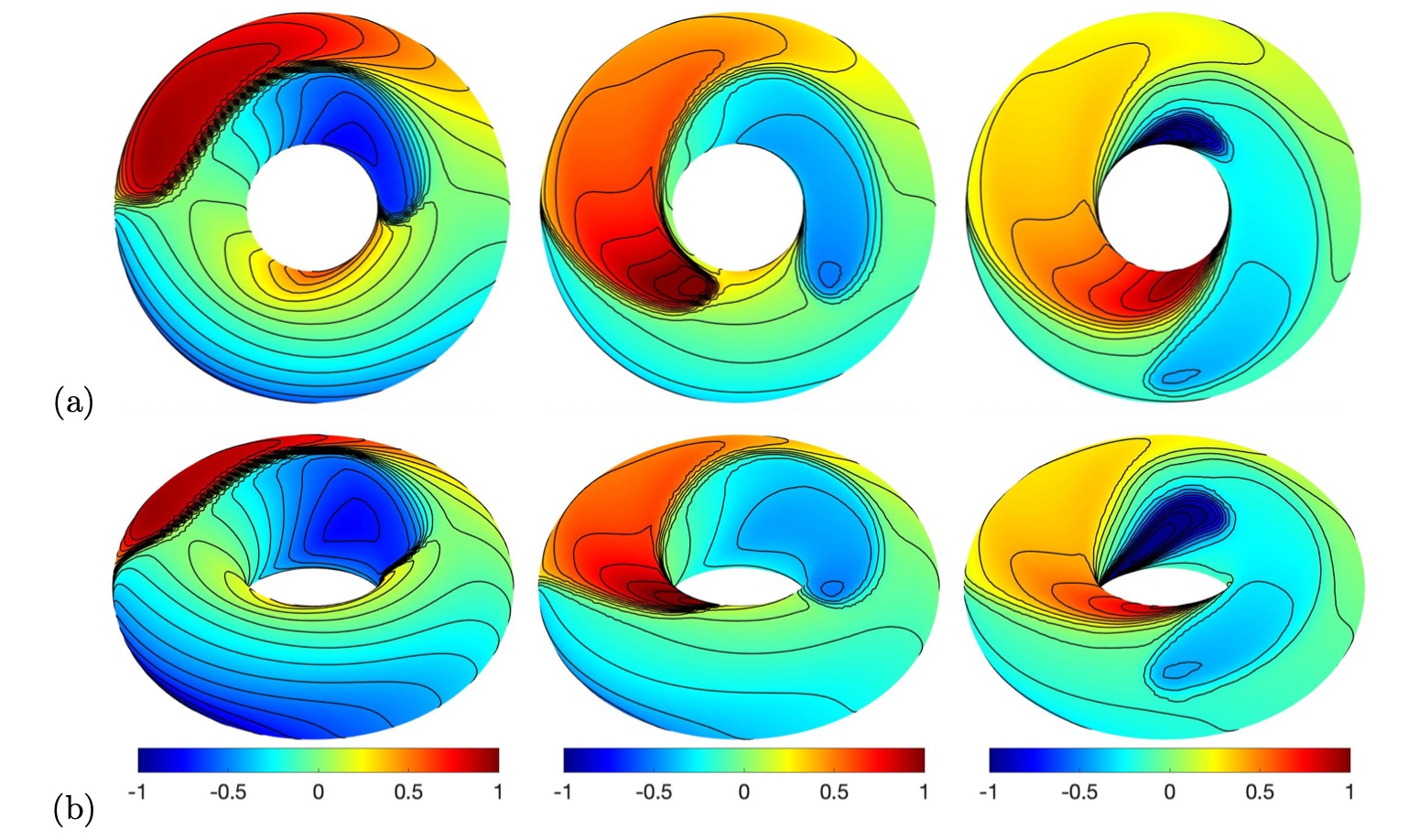}
    \caption{(Section \ref{SubSec:ExBurgerTorus} Burgers' equation on a torus, WENO3) Numerical solutions at $t = 2\pi/5, 6\pi/5$ and $2\pi$ with $\Delta x=0.05$. (a-b) Solutions observed from different angles.}
    \label{Fig:BurgerTorus005}
    \end{figure}

    \begin{figure}[!ht]
    \centering
    \includegraphics[width=\textwidth]{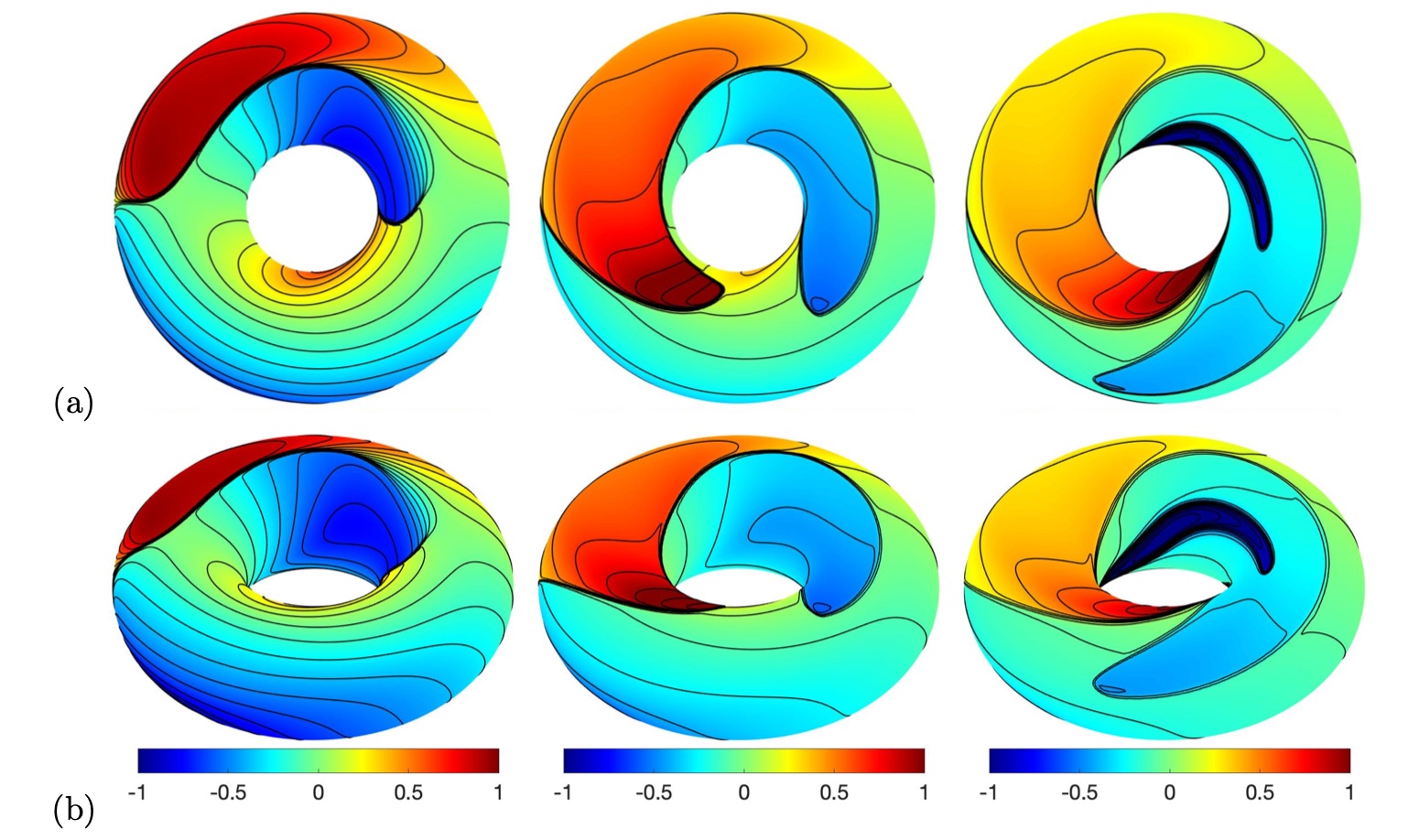}
    \caption{(Section \ref{SubSec:ExBurgerTorus} Burgers' equation on a torus, WENO3) Numerical solutions at $t = 2\pi/5, 6\pi/5$ and $2\pi$ with $\Delta x=0.0125$. (a-b) Solutions observed from different angles.}

    \label{Fig:BurgerTorus00125}
    \end{figure}

Finally, we consider \reminder{Burgers' equation} on the same torus as described in Section \ref{SubSec:ExAdvectionTorus}. The embedding equation in this example is given by
$
u_t + \nabla \cdot \left\{ [I - \phi(\vec{x})H(\vec{x})]^{-1} \vec{F}(u) \right\} = 0
$
where $\vec{F}(u) = \frac{1}{2}u^2(\uvec{\theta} + \uvec{\eta})$ is the flux function constructed along the two principal directions. We consider the smooth initial condition $u_0(\theta, \eta) = \sin\theta , \cos\eta$. Figures \ref{Fig:BurgerTorus005}-\ref{Fig:BurgerTorus00125} show the solution at various times computed by the mesh given by $\Delta x=0.05$ and $\Delta x=0.0125$, respectively. We can see that the resolution in the solution is significantly improved. As we refine the grid, we can much better resolve the shock and clearly see the fine details in the solution.


\section{Summary}
\label{Sec:Conclusion}

By revealing that the closest-point function $P_{\Gamma}$ is a family of diffeomorphisms from $\Gamma_h$ to $\Gamma$, we design a natural embedding flux via the push-forward concept from differential geometry. The proposed method does not rely on local coordinates of $\Gamma$ and performs all the computation on Cartesian grids, so we can utilize any well-developed numerical methods. The numerical examples demonstrate that the solution is constant along the normal directions, and the proposed method can achieve high-order convergence.

There are many possible future directions to explore. One simple possibility is to extend the method to solve systems of conservation laws on surfaces. Secondly, since this approach works perfectly well with the level set method, we propose incorporating the method to other interface problems involving surface gradient or solving PDEs and variational problems on moving surfaces. Also, it is interesting to extend the method to PDEs on non-closed surfaces with a boundary, which poses challenges in incorporating boundary conditions in the embedding framework and coupling constraints with the push-forward and projection operator. \reminder{Parallel implementations will be considered in the future.} Finally, a high-order method to compute surface integrals could be developed following recent approaches in \cite{kublik2016integration,martin2020equivalent}.

\bibliographystyle{plain}
\bibliography{syleung,Bibliography}

\begin{thebibliography}{10}

\bibitem{adalsteinsson2003transport}
D.~Adalsteinsson and J.A. Sethian.
\newblock Transport and diffusion of material quantities on propagating
  interfaces via level set methods.
\newblock {\em Journal of Computational Physics}, 185(1):271--288, 2003.

\bibitem{ben2009hyperbolic}
M.~Ben-Artzi, J.~Falcovitz, and P.G. LeFloch.
\newblock {Hyperbolic conservation laws on the sphere. A geometry-compatible
  finite volume scheme}.
\newblock {\em J. Comput. Phys.}, 228(16):5650--5668, 2009.

\bibitem{ben2007well}
M.~Ben-Artzi and P.G. LeFloch.
\newblock Well-posedness theory for geometry-compatible hyperbolic conservation
  laws on manifolds.
\newblock {\em Annales de l'Institut Henri Poincar{\'e} C}, 24(6):989--1008,
  2007.

\bibitem{ber87}
M.J. Berger.
\newblock On conservation at grid interfaces.
\newblock {\em SIAM J. Num. Anal.}, 24(5):967--984, 1987.

\bibitem{bertalmio2001variational}
M.~Bertalm{\i}o, L.-T. Cheng, S.~Osher, and G.~Sapiro.
\newblock Variational problems and partial differential equations on implicit
  surfaces.
\newblock {\em J. Comput. Phys.}, 174(2):759--780, 2001.

\bibitem{d1969divergence}
J.E. D'Atri and H.K. Nickerson.
\newblock Divergence-preserving geodesic symmetries.
\newblock {\em J. Differ. Geom.}, 3(3-4):467--476, 1969.

\bibitem{dinshuzha20}
S.~Ding, C.-W. Shu, and M.~Zhang.
\newblock {On the conservation of finite difference WENO schemes in
  non-rectangular domains using the inverse Lax-Wendroff boundary treatments}.
\newblock {\em J. Comput. Phys.}, 415(109516), 2020.

\bibitem{dziuk1988finite}
G.~Dziuk.
\newblock {Finite elements for the Beltrami operator on arbitrary surfaces}.
\newblock In {\em {Partial Differential Equations and Calculus of Variations}},
  pages 142--155. Springer, 1988.

\bibitem{dziuk2013finite}
G.~Dziuk and C.M. Elliott.
\newblock {Finite element methods for surface PDEs}.
\newblock {\em Acta Numerica}, 22:289--396, 2013.

\bibitem{flyer2009radial}
N.~Flyer and G.B. Wright.
\newblock A radial basis function method for the shallow water equations on a
  sphere.
\newblock {\em Proc. R. Soc. A: Math. Phys. Eng. Sci.}, 465(2106):1949--1976,
  2009.

\bibitem{gray2017modern}
A.~Gray, E.~Abbena, and S.~Salamon.
\newblock {\em Modern differential geometry of curves and surfaces with
  Mathematica{\textregistered}}.
\newblock Chapman and Hall/CRC, 2017.

\bibitem{greer2006improvement}
J.B. Greer.
\newblock {An improvement of a recent Eulerian method for solving PDEs on
  general geometries}.
\newblock {\em J. Comput. Phys.}, 29(3):321--352, 2006.

\bibitem{greer2006fourth}
J.B. Greer, A.L. Bertozzi, and G.~Sapiro.
\newblock Fourth order partial differential equations on general geometries.
\newblock {\em J. Comput. Phys.}, 216(1):216--246, 2006.

\bibitem{haltiner1971numerical}
G.J. Haltiner.
\newblock {\em Numerical weather prediction}.
\newblock John Wiley \& Sons, 1971.

\bibitem{houlizoshzha97}
T.~Y. Hou, Z.~Li, S.~J. Osher, and H.-K. Zhao.
\newblock A hybrid method for moving interface problems with applications to
  the hele-shaw flows.
\newblock {\em J. Comput. Phys.}, 134:236--252, 1997.

\bibitem{kublik2016integration}
C.~Kublik and R.~Tsai.
\newblock Integration over curves and surfaces defined by the closest point
  mapping.
\newblock {\em Res. Math. Sci.}, 3(1):1--17, 2016.

\bibitem{lehmann2012notes}
N.~Lehmann and U.~Reif.
\newblock Notes on the curvature tensor.
\newblock {\em Graphical models}, 74(6):321--325, 2012.

\bibitem{macdonald2008level}
C.B. Macdonald and S.J. Ruuth.
\newblock Level set equations on surfaces via the closest point method.
\newblock {\em J. Sci. Comput.}, 35(2):219--240, 2008.

\bibitem{martin2020equivalent}
L.~Martin and R.~Tsai.
\newblock {Equivalent Extensions of Hamilton--Jacobi--Bellman Equations on
  Hypersurfaces}.
\newblock {\em J. Sci. Comput.}, 84(3):1--29, 2020.

\bibitem{olshanskii2010finite}
M.A. Olshanskii and A.~Reusken.
\newblock {A finite element method for surface PDEs: matrix properties}.
\newblock {\em Numerische Mathematik}, 114(3):491, 2010.

\bibitem{ruuth2008simple}
S.J. Ruuth and B.~Merriman.
\newblock A simple embedding method for solving partial differential equations
  on surfaces.
\newblock {\em J. Comput. Phys.}, 227(3):1943--1961, 2008.

\bibitem{say14}
R.I. Saye.
\newblock High-order methods for computing distances to implicitly defined
  surfaces.
\newblock {\em Comm. App. Math. and Comp. Sci.}, 9(1):107--141, 2014.

\bibitem{sethian2008solving}
J.A. Sethian and Y.~Shan.
\newblock Solving partial differential equations on irregular domains with
  moving interfaces, with applications to superconformal electrodeposition in
  semiconductor manufacturing.
\newblock {\em Journal of Computational Physics}, 227(13):6411--6447, 2008.

\bibitem{shu1998essentially}
C.-W. Shu.
\newblock {\em Essentially non-oscillatory and weighted essentially
  non-oscillatory schemes for hyperbolic conservation laws}.
\newblock Springer, 1998.

\bibitem{wong2016fast}
T.~Wong and S.~Leung.
\newblock A fast sweeping method for eikonal equations on implicit surfaces.
\newblock {\em J. Sci. Comput.}, 67(3):837--859, 2016.

\bibitem{xu2003eulerian}
J.J. Xu and H.K. Zhao.
\newblock {An Eulerian formulation for solving partial differential equations
  along a moving interface}.
\newblock {\em J. Sci. Comput.}, 19(1):573--594, 2003.

\end{thebibliography}

\end{document}